\numberwithin{equation}{section}
\theoremstyle{plain}
\newtheorem{theorem}{Theorem}[section]
\newtheorem{proposition}[theorem]{Proposition}
\newtheorem{lemma}[theorem]{Lemma}
\theoremstyle{remark}
\newtheorem{remark}{{\sc Remark}}[section]
\theoremstyle{definition}
\newcommand{\reft}[1]{Theorem~\ref{#1}}
\newcommand{\refp}[1]{Proposition~\ref{#1}}
\newcommand{\refl}[1]{Lemma~\ref{#1}}
\newcommand{\refr}[1]{Remark~\ref{#1}}
\newcommand{\refeq}[1]{Eq.~\eqref{#1}}
\def\alp{\alpha}
\def\bet{\beta}
\def\eps{\varepsilon}
\def\sig{\sigma}
\def\tet{\vartheta}
\def\vphi{\varphi}
\def\vsig{\varsigma}
\def\ome{\omega}
\def\Gam{\Gamma}
\def\Lam{\varLambda}
\def\VPhi{\varPhi}
\def\VPsi{\varPsi}
\def\Ome{\Omega}
\def\btet{\boldsymbol{\vartheta}}
\def\bOme{\boldsymbol{\Ome}}
\def\bome{\boldsymbol{\ome}}
\def\bxi{\boldsymbol{\xi}}
\def\CA{\mathcal{A}}
\def\CB{\mathcal{B}}
\def\CC{\mathcal{C}}
\def\CE{\mathcal{E}}
\def\CK{\mathcal{K}}
\def\CL{\mathcal{L}}
\def\CR{\mathcal{R}}
\def\CV{\mathcal{V}}
\def\CW{\mathcal{W}}
\def\C{\mathbb{C}}
\def\F{\mathbb{F}}
\def\N{\mathbb{N}}
\def\Z{\mathbb{Z}}
\def\fq{\mathbb{F}_{q}}
\def\x{\times}
\def\UL{\operatorname{UL}}
\def\k{\Bbbk}
\def\ox{\otimes}
\newcommand{\map}[3]{#1 \colon #2 \to #3}
\newcommand{\set}[2]{\{ #1 \colon #2 \}}
\def\irr{\operatorname{Irr}}
\def\inv{^{-1}}
\def\+{\oplus}
\def\nm{\operatorname{Nm}}
\def\Cl{\operatorname{Cl}}
\def\cf{\operatorname{cf}}
\def\Sh{\operatorname{Sh}}
\def\SCl{\operatorname{SCl}}
\def\scf{\operatorname{scf}}
\def\SCh{\operatorname{SCh}}
\def\SMod{\operatorname{SMod}}
\def\AC{\widehat{\CA}}
\def\sset{\subseteq}
\newcommand{\inc}[2]{#1 \hookrightarrow #2}
\def\Hom{\operatorname{Hom}}
\def\End{\operatorname{End}}
\def\cx{\C^{\x}}
\def\ind{\operatorname{Ind}}
\newcommand{\frob}[2]{\langle #1 , #2 \rangle}
\def\tr{\operatorname{Tr}}
\newcommand{\iso}[3]{#1 \colon #2 \overset{\sim}{\to} #3}
\newcommand{\seq}[2]{#1_{1}, \ldots, #1_{#2}}
\def\FInd{\operatorname{F-Ind}}
\def\ovl{\overline}
\def\all{\text{ for all }}
\begin{document}


\title[Shintani descent for standard supercharacters]{Shintani descent for standard supercharacters of algebra groups}

\author[C. Andr\'e]{Carlos A. M. Andr\'e$^{1,\dagger}$}
\author[A.L. Branco Correia]{Ana L. Branco Correia$^{1,2}$}
\author[J. Dias]{Jo\~ao Dias$^{1}$}
 \address{$^1$Centro de An\'alise Funcional, Estruturas Lineares e Aplica\c c\~oes (Grupo de Estruturas Lineares e Combinat\'orias) \\ Departamento de Matem\'atica \\ Faculdade de Ci\^encias da Universidade de Lisboa \\ Campo Grande, Edif\'\i cio C6, Piso 2 \\ 1749-016 Lisboa \\ Portugal}
 \address{$^2$Centro de Investiga\c c\~ao, Desenvolvimento e Inova\c c\~ao da Aca\-de\-mia Mi\-li\-tar (CINAMIL) \& Departamento de Ci\^encias Exatas e Engenharias \\ Academia Militar \\ Av. Conde Castro Guimar\~aes \\ 2720-113 Amadora \\ Portugal}
 \address{$^{\dagger}$Corresponding author}
 \email[C. Andr\'e]{caandre@ciencias.ulisboa.pt}
 \email[A.L. Correia]{alcorreia@ciencias.ulisboa.pt} 
 \email[J. Dias]{joaodias104@gmail.com}
%
%

\thanks{This research was made within the activities of the Group for Linear, Algebraic and Combinatorial Structures of the Centre for Functional Analysis, Linear Structures and Applications (University of Lisbon, Portugal), and was partially supported by the Portuguese Science Foundation (FCT) through the Strategic Projects UID/MAT/04721/2013 and UIDB/04721/2020.}

\subjclass[2020]{20C15, 20D15, 20G40}

\date{April 10, 2023}

\keywords{Shintani descent; Supercharacter; Superclass function; Algebra group; Superdual algebra}

\begin{abstract}
Let $\CA(q)$ be a finite-dimensional nilpotent algebra over a finite field $\fq$ with $q$ elements, and let $G(q) = 1+\CA(q)$. On the other hand, let $\k$ denote the algebraic closure of $\fq$, and let $\CA = \CA(q) \otimes_{\fq} \k$. Then $G = 1+\CA$ is an algebraic group over $\k$ equipped with an $\fq$-rational structure given by the usual Frobenius map $\map{F}{G}{G}$, and $G(q)$ can be regarded as the fixed point subgroup $G^{F}$. For every $n \in \N$, the $n$th power $\map{F^{n}}{G}{G}$ is also a Frobenius map, and $G^{F^{n}}$ identifies with $G(q^{n}) = 1 + \CA(q^{n})$. The Frobenius map restricts to a group automorphism $\map{F}{G(q^{n})}{G(q^{n})}$, and hence it acts on the set of irreducible characters of $G(q^{n})$. Shintani descent provides a method to compare $F$-invariant irreducible characters of $G(q^{n})$ and irreducible characters of $G(q)$. In this paper, we show that it also provides a uniform way of studying supercharacters of $G(q^{n})$ for $n \in \N$. These groups form an inductive system with respect to the inclusion maps $G(q^{m}) \to G(q^{n})$ whenever $m \mid n$, and this fact allows us to study all supercharacter theories simultaneously, to establish connections between them, and to relate them to the algebraic group $G$. Indeed, we show that Shintani descent permits the definition of a certain ``superdual algebra'' which encodes information about the supercharacters of $G(q^{n})$ for $n \in \N$.
\end{abstract}

\maketitle


\section{Introduction} \label{intro}

Throughout this paper, we fix a prime number $p$, as well as a power $q$ of $p$. We denote by $\fq$ the finite field with $q$ elements, and fix a finite-dimensional associative nilpotent $\fq$-algebra $\CA(q)$; in particular, $\CA(q)$ does not have an identity. Moreover, we let $G(q) = 1+\CA(q)$ be the set of formal objects of the form $1+a$ where $a \in \CA(q)$. Then $G(q)$ is easily seen to be a group with respect to the multiplication defined by $(1+a)(1+b) = 1+a+b+ab$ for all $a,b \in \CA(q)$. Following \cite{Isaacs1995a}, a group $G(q)$ constructed in this way will be referred to as the \textit{algebra group} associated with $\CA(q)$. As a well-known example, if $\CA(q) = \mathcal{UL}_{n}(q)$ is the $\fq$-algebra consisting of all strictly upper triangular $n \x n$ matrices over $\fq$, then the corresponding algebra group $G(q) = 1+\CA(q)$ is isomorphic to the unipotent linear group $\UL_{n}(q)$ consisting of all unipotent upper-triangular $n \x n$ matrices over $\fq$.

Let $\k$ denote the algebraic closure of $\fq$, let $\CA = \CA(q) \ox_{\fq} \k$ be the nilpotent $\k$-algebra obtained from $\CA(q)$ by extension of scalars, and let $G = 1+\CA$. Then $G$ is an algebraic group over $\k$ equipped with an $\fq$-rational structure given by the usual Frobenius map $\map{F}{G}{G}$. Moreover, the algebra group $G(q)$ can be regarded as the fixed point subgroup $G^{F} = \set{g \in G}{F(g) = g}$. Let $n \in \N$ be any positive integer, and consider the $n$th power $\map{F^{n}}{G}{G}$ of $F$. Then $F^{n}$ is also a Frobenius map for an $\F_{q^{n}}$-rational structure on $G$, and the fixed point subgroup $G^{F^{n}}$ is naturally identified with the algebra group $G(q^{n})$ associated with the nilpotent $\F_{q^{n}}$-algebra $\CA(q^{n}) = \CA(q) \ox_{\fq} \F_{q^{n}}$. Furthermore, the Frobenius map $\map{F}{G}{G}$ restricts to a group automorphism (of order $n$) $\map{F}{G(q^{n})}{G(q^{n})}$, and hence it acts on the set of irreducible characters of $G(q^{n})$. Shintani descent provides a method to compare $F$-invariant irreducible characters of $G(q^{n})$ and irreducible characters of $G(q)$. Henceforth, we denote by $\irr(G(q))$ the set of irreducible characters of $G(q)$, and by $\irr(G(q^{n}))^{F}$ the set of $F$-invariant irreducible characters of $G(q^{n})$.
 
More precisely, we consider the action of $G(q^{n})$ on itself given by $F$-twisted conjugation: $g \cdot h = ghF(g\inv)$ for all $g,h \in G(q^{n})$. Using Lang's theorem, we can define the \textit{norm map} $\map{\nm_{n}}{\Cl_{F}(G(q^{n}))}{\Cl(G(q))}$ which is a bijection between the set $\Cl_{F}(G(q^{n}))$ of $F$-conjugacy classes of $G(q^{n})$ and the set $\Cl(G(q))$ of conjugacy classes of $G(q)$. For details on the definition of $\nm_{n}$, see the paragraph below \refl{extension}. On the other hand, let $\cf_{F}(G(q^{n}))$ denote the complex vector space of functions $G(q^{n}) \to \C$ which take a constant value on each $F$-conjugacy class, and let $\cf(G(q))$ denotes the complex vector space of functions $G(q) \to \C$ which take a constant value on each conjugacy class. The mapping $\vphi \mapsto \vphi \circ \nm_{n}\inv$ defines a $\C$-linear isomorphism $\map{\Sh_{n}}{\cf_{F}(G(q^{n}))}{\cf(G(q))}$ which we call the \textit{Shintani descent} from $G(q^{n})$ to $G(q)$. For every $F$-invariant irreducible character of $\chi \in \irr(G(q^{n}))^{F}$, we can define a function $\widetilde{\chi} \in \cf_{F}(G(q^{n}))$ (which is well-defined up to scaling by $n$th roots of unity) such that the set $\set{\widetilde{\chi}}{\chi \in \irr(G(q^{n}))^{F}}$ forms an orthonormal basis of $\cf_{F}(G(q^{n}))$ (with respect to the usual Hermitian inner product). The Shintani image $\set{\Sh_{n}(\widetilde{\chi})}{\chi \in \irr(G(q^{n}))^{F}}$ is an orthonormal basis of $\cf(G(q))$ which, in general, is not equal to the orthonormal basis $\irr(G(q))$.

The irreducible characters of finite algebra groups are hard to describe. Thus, it might be of interest to consider smaller and more manageable families of characters which are still rich enough to provide some relevant information about their representations. This question was first addressed in the context of the finite unitriangular groups $\UL_{n}(q)$ by Andr\'e in \cite{Andre1995a} where supercharacters were named ``basic characters'' (see also \cite{Andre2001a,Andre2002a}), and later in the context of arbitrary finite groups by Diaconis and Isaacs \cite{Diaconis2008a}, where the notion of a supercharacter theory was formalised. By a \textit{supercharacter theory} of a finite group $G$ we mean a pair $(\CK,\CE)$ where $\CK$ is a partition of $G$, and $\CE$ is an orthogonal set of characters of $G$ satisfying:
\begin{enumerate}
\item $|\CK| = |\CE|$,
\item every character $\xi \in \CE$ takes a constant value on each member $K \in \CK$, and
\item each irreducible character of $G$ is a constituent of one of the characters $\xi \in \CE$.
\end{enumerate}
We refer to the members $K \in \CK$ as \textit{superclasses} and to the characters $\xi \in \CE$ as \textit{supercharacters} of $G$. Note that superclasses of $G$ are always unions of conjugacy classes. Moreover, $\{1\}$ is a superclass and the trivial character $1_{G}$ is always a supercharacter of $G$. The superclasses and the supercharacters of a particular supercharacter theory exhibit much of the same duality as the conjugacy classes and the irreducible characters of the group do, and thus a supercharacter theory can be interpreted as an ``approximation'' of the classical character theory. Indeed, the usual character theory of $G$ is a trivial example of a supercharacter theory where $\CK$ is the set $\Cl(G)$ of conjugacy classes of $G$ and $\CE$ is the set $\irr(G)$ of irreducible characters of $G$.

The \textit{standard supercharacter theory} of an arbitrary finite algebra group is described in \cite{Diaconis2008a}; see also \cite{Andre1999a,Andre2008a,Yan2001a}. The superclasses of $G(q)$ are easy to describe; since there is no danger of ambiguity, we will abbreviate the terminology and refer to a standard superclass simply as a superclass of $G$. The direct product $\Gam(q) = G(q) \x G(q)$ acts naturally on the left of $\CA(q)$ by the rule $(g,h) \cdot a = gah\inv$ for all $g,h \in G(q)$ and all $a\in\CA(q)$. Then $\CA(q)$ is partitioned into $\Gam(q)$-orbits $\Gam(q) \cdot a$ for $a \in \CA(q)$, and this determines a partition of $G(q)$ into subsets $1 + \Gam(q) \cdot a$ for $a \in \CA(q)$. These subsets are precisely what we define as the \textit{superclasses} of $G(q)$. We use the notation $\SCl(G(q))$ to the denote the set of superclasses of $G(q)$.

We next define the set of supercharacters of $G(q)$; as in the case of superclasses, by a supercharacter of $G(q)$ we will always understand a standard supercharacter. A function $\map{\vphi}{G(q)}{\C}$ is a \textit{superclass function} if it is constant on the superclasses. We denote by $\scf(G(q))$ the complex vector space of superclass functions defined on $G(q)$; it is clear that $\scf(G(q))$ is a vector subspace of $\cf(G(q))$. The supercharacters of $G(q)$ form a distinguished orthogonal basis of $\scf(G(q))$ which we will denote by $\SCh(G(q))$. Similarly to the case of superclasses, the elements of $\SCh(G(q))$ are parametrised by the orbits of the contragradient action of $\Gam(q) = G(q) \x G(q)$ on the Pontryagin dual $\AC(q)$ of the additive group of $\CA(q)$. For every $\tet \in \AC(q)$ and every $g,h\in G(q)$, we define $(g,h) \cdot \tet \in \AC(q)$ by the rule $((g,h) \cdot \tet)(a) = \tet(g\inv ah)$ for all $a \in \CA(q)$. For every $\Gam(q)$-orbit $\ome \sset \AC$, we define a function $\map{\xi_{\ome}}{G(q)}{\C}$ whose normalisation 
$\map{\widetilde{\xi}_{\ome}}{G(q)}{\C}$ is given by
\begin{equation} \label{FSCh}
\widetilde{\xi}_{\ome}(1+a) = \frac{1}{|\ome|} \sum_{\tet \in \ome} \tet(a)
\end{equation}
for all $a \in \CA(q)$. (For every character $\map{\chi}{G(q)}{\C}$, we define the \textit{normalised character} $\map{\widetilde{\chi}}{G(q)}{\C}$ to be the function given by $\widetilde{\chi} = \chi(1)\inv \chi$.) We set $\SCh(G(q)) = \set{\xi_{\ome}}{\ome \in \Ome}$ where $\Ome$ denotes the set of $\Gam(q)$-orbits on $\AC(q)$. By \cite[Theorem~5.6]{Diaconis2008a}, we know that $\SCh(G(q))$ is the set of supercharacters of a supercharacter theory of $G(q)$ whose set of superclasses is $\SCl(G(q))$.

The Frobenius automorphism $\map{F}{G(q^{n})}{G(q^{n})}$ acts on the set $\SCh(G(q^{n}))$ of supercharacters of $G(q^{n})$ (see \refp{invariant}). As in the case of irreducible characters, Shintani descent allows us to compare the set $\SCh(G(q^{n}))^{F}$ of $F$-invariant supercharacters of $G(q^{n})$ with $\SCh(G(q))$. Indeed, every $F$-invariant supercharacter of $G(q^{n})$ takes a constant value on each $F$-conjugacy class of $G(q^{n})$ (see \refl{extension}), and so $\SCh(G(q^{n}))^{F}$ is an orthogonal basis of $\scf_{F}(G(q^{n}))$. As a consequence, the Shintani image $\set{\Sh_{n}(\xi)}{\xi \in \SCh(G(q^{n}))^{F}}$ is an orthogonal basis of $\scf(G(q))$. We will prove that the Shintani image of an $F$-invariant supercharacter of $G(q^{n})$ is in fact a supercharacter of $G(q)$, and thus we conclude that the mapping $\xi \mapsto \Sh_{n}(\xi)$ defines a bijection between $\SCh(G(q^{n}))^{F}$ and $\SCh(G(q))$ (see \reft{main}).

Shintani descent also provides a uniform way of studying supercharacters of the distinct algebra groups $G(q^{n})$ for $n \in \N$. We observe that the finite groups $G(q^{n})$, for $n \in \N$, form an inductive system with respect to the natural inclusion maps $\inc{G(q^{m})}{G(q^{n})}$ whenever $m,n \in \N$ are such that $m \mid n$. This fact allows us to study all supercharacter theories simultaneously, to establish connections between them, and to relate them to the affine algebraic group $G$. We note that the inductive limit $\varinjlim_{n} G(q^{n})$ identifies with the filtered union $\bigcup_{n \in \N} G(q^{n}) = G$, and Shintani descent permits the definition of a certain \textit{``superdual algebra''} which ``contains'' $\scf(G(q^{n}))$ for all $n \in \N$, and hence encodes information about the supercharacters of all algebra groups $G(q^{n})$ for $n \in \N$.

\section{Shintani descent for supercharacters}

Let $n \in \N$ be arbitrary (but fixed), let $G(q^{n}) = 1+\CA(q^{n})$ be as in the introduction, and let $\AC(q^{n}) = \Hom(\CA^{+}(q^{n}),\cx)$ be the dual group of the additive group $\CA^{+}(q^{n})$ of $\CA(q^{n})$. For every $\tet \in \AC(q^{n})$ and every $g\in G(q^{n})$, we define $g\tet, \tet g \in \AC(q^{n})$ by the formulas $(g\vartheta)(a) = \vartheta(g\inv a)$ and $(\vartheta g)(a) = \vartheta(ag\inv)$ for all $a \in \CA(q^{n})$. Thus, for every $\tet \in \AC(q^{n})$, we have a left $G(q^{n})$-orbit $G(q^{n})\tet$, a right $G(q^{n})$-orbit $\tet G(q^{n})$, and also a two-sided $G(q^{n})$-orbit $G(q^{n})\vartheta G(q^{n})$ (we note that the left and right $G(q^{n})$-actions on $\AC(q^{n})$ commute).

Let $\tet \in \AC(q^{n})$ be arbitrary, and let $$L_{G(q^{n})}(\tet) = \set{g \in G(q^{n})}{g\tet = \tet}$$ denote the left centraliser of $\tet$ in $G(q^{n})$. We note that $L_{G(q^{n})}(\tet) = 1 + \CL_{G(q^{n})}(\tet)$ where $$\CL_{G(q^{n})}(\tet) = \set{a \in \CA(q^{n})}{\tet(au) = 1 \all u \in \CA(q^{n})}.$$ It is clear that $\CL_{G(q^{n})}(\tet)$ is a subalgebra of $\CA(q^{n})$, and hence $L_{G(q^{n})}(\tet)$ is an algebra subgroup of $G(q^{n})$. (A subgroup $H$ of $G(q^{n})$ is an \textit{algebra subgroup} if there exists a subalgebra $\CB(q^{n})$ of $\CA(q^{n})$ such that $H = 1+\CB(q^{n})$.) It is straightforward to check that the mapping $g \mapsto \tet(g-1)$ defines a linear character $\map{\nu_{\tet}}{L_{G(q^{n})}(\tet)}{\cx}$ of $L_{G(q^{n})}(\tet)$. We define the \textit{(standard) supercharacter} of $G(q^{n})$ associated with $\tet$ to be the induced character
\begin{equation} \label{supch}
\xi_{\tet} = \ind^{G(q^{n})}_{L_{G(q^{n})}(\tet)}(\nu_{\tet}).
\end{equation}
Thus, for every $g \in G$, $$\xi_{\tet}(g) = \frac{1}{|L_{G(q^{n})}(\tet)|} \sum_{h \in G(q^{n})} (\nu_{\tet})^{\circ}(hgh\inv)$$ where $\map{(\nu_{\tet})^{\circ}}{G(q^{n})}{\C}$ denotes the extension-by-zero of $\nu_{\tet}$ to $G(q^{n})$.

By \cite[Theorem~5.6]{Diaconis2008a}), we know that
\begin{equation} \label{superchar}
\xi_{\tet}(g) = \frac{|G(q^{n})\tet|}{|G(q^{n})\tet G(q^{n})|} \sum_{\tet' \in G(q^{n}) \tet G(q^{n})} \tet'(g-1)
\end{equation}
for all $g \in G(q^{n})$. From this formula, it is clear that, for every $\tet,\tet' \in \AC(q^{n})$, we have $\frob{\xi_{\tet}}{\xi_{\tet'}} = 0$ unless $G(q^{n}) \tet G(q^{n}) = G(q^{n}) \tet' G(q^{n})$, in which case $\xi_{\tet} = \xi_{\tet'}$. (If $G$ is any finite group, we denote by $\frob{-}{-}_{G}$, or simply by $\frob{-}{-}$ if there is no danger of ambiguity, the usual Hermitian inner product on the complex vector space consisting of all functions $G \to \C$.) Furthermore, it is straightforward to check that the regular character $\rho_{G(q^{n})}$ of $G(q^{n})$ decomposes as the orthogonal sum $$\rho_{G(q^{n})} = \sum_{\xi \in \SCh(G(q^{n}))} m_{\xi}\, \xi$$ where $m_{\xi} = \xi(1)\slash \frob{\xi}{\xi} \in \Z_{\geq 0}$ for all $\xi \in \SCh(G(q^{n}))$; we remark that, for every $\tet \in \AC(q^{n})$, we have $\xi_{\tet}(1) = |G(q^{n})\tet|$ and $\frob{\xi_{\tet}}{\xi_{\tet}} = |G(q^{n})\tet \cap \tet G(q^{n})|$. As a consequence of this decomposition, we conclude that every irreducible character of $G(q^{n})$ is a constituent of a unique supercharacter.

\begin{remark}
For every $\tet \in \AC(q^{n})$, we may also define the \textit{right centraliser} $$R_{G(q^{n})}(\tet) = \set{g \in G(q^{n})}{\tet g = \tet}$$ of $\tet$ in $G(q^{n})$, and all results about left centralisers may be dualised for right centralisers. In particular, $R_{G(q^{n})}(\tet)$ is an algebra subgroup of $G(q^{n})$; in fact, $$\CR_{G(q^{n})}(\tet) = R_{G(q^{n})}(\tet)-1 = \set{a \in \CA(q^{n})}{\tet(ua) = 1 \all u \in \CA(q^{n})}$$ is clearly a subalgebra of $\CA(q^{n})$. Furthermore, the mapping $g \mapsto \tet(g-1)$ defines a linear character $\map{\nu'_{\tet}}{R_{G(q^{n})}(\tet)}{\cx}$ of $R_{G(q^{n})}(\tet)$, and we may consider the induced character $\xi'_{\tet} = \ind^{G(q^{n})}_{R_{G(q^{n})}(\tet)}(\nu'_{\tet})$. As for the supercharacter $\xi_{\tet}$, it can be proved that $$\xi'_{\tet}(g) = \frac{|\tet G(q^{n})|}{|G(q^{n})\tet G(q^{n})|} \sum_{\tet' \in G(q^{n}) \tet G(q^{n})} \tet'(g-1)$$ for all $g \in G(q^{n})$. Since $|\tet G(q^{n})| = |G(q^{n})\tet|$ (see \cite[Lemma~4.2]{Diaconis2008a}), we conclude that $\xi'_{\tet} = \xi_{\tet}$.
\end{remark}

\begin{remark} \label{centraliser}
We recall that, for every $\tet \in \AC(q^{n})$, the two-sided $G(q^{n})$-orbit $G(q^{n})\tet G(q^{n})$ is defined to be the $\Gam(q^{n})$-orbit $\Gam(q^{n})\cdot \tet$ where the direct product $\Gam(q^{n}) = G(q^{n})\x G(q^{n})$ acts naturally on the left of $\AC(q^{n})$ via $(g,h)\cdot \tau = g\tau h\inv$ for all $g,h \in G(q^{n})$ and all $\tau \in \AC(q^{n})$. We note that $\Gam(q^{n})$ is the algebra group associated with the nilpotent $\F_{q^{n}}$-algebra $\CA(q^{n})\x \CA(q^{n})$, and that the centraliser $C_{\Gam(q^{n})}(\tet)$ of $\tet$ in $\Gam(q^{n})$ is given by $C_{\Gam(q^{n})}(\tet) = 1+\CC_{\Gam(q^{n})}(\tet)$ where $$\CC_{\Gam(q^{n})}(\tet) = \set{(a,b) \in \CA(q^{n})\x \CA(q^{n})}{\tet(au) = \tet(ub) \all u \in \CA(q^{n})}.$$ Since $\CC_{\Gam(q^{n})}(\tet) $ is a subalgebra of $\CA(q^{n})\x \CA(q^{n})$ in $G(q^{n})$, we see that $\CC_{\Gam(q^{n})}(\tet)$ is an algebra subgroup of $\Gam(q^{n})$.
\end{remark}

Now, let $\map{F}{G(q^{n})}{G(q^{n})}$ denote the Frobenius automorphism of $G(q^{n})$, and let $\widetilde{G}(q^{n})$ denote the semidirect product $\widetilde{G}(q^{n}) = G(q^{n}) S$ where $S = \langle F \rangle$ denotes the cyclic group generated by $F$ and where the multiplication satisfies $F g = F(g)F$ for all $g \in G(q^{n})$. Let $\tet \in \AC(q^{n})$ be $F$-invariant, and consider the induced character $\ind_{G(q^{n})}^{\widetilde{G}(q^{n})}(\xi_{\tet})$. By transitivity of induction (and by the definition of $\xi_{\tet}$), $$\ind_{G(q^{n})}^{\widetilde{G}(q^{n})}(\xi_{\tet}) = \ind_{G(q^{n})}^{\widetilde{G}(q^{n})} \big( \ind_{L_{G(q^{n})}(\tet)}^{G(q^{n})}(\nu_{\tet}) \big) = \ind_{\widetilde{L}_{G(q^{n})}(\tet)}^{\widetilde{G}(q^{n})} \big( \ind_{L_{G(q^{n})}(\tet)}^{\widetilde{L}_{G(q^{n})}(\tet)}(\nu_{\tet}) \big)$$ where $\widetilde{L}_{G(q^{n})}(\tet) = L_{G(q^{n})}(\tet)S$. We note that, since $\tet$ is $F$-invariant, $L_{G(q^{n})}(\tet)$ is an $F$-invariant subgroup of $G(q^{n})$, and hence $\widetilde{L}_{G(q^{n})}(\tet) = L_{G(q^{n})}(\tet)S$ is in fact a subgroup of $\widetilde{G}(q^{n})$. Let $\widehat{S} = \Hom(S,\cx)$ denote the dual group of $S$. For every $\tau \in \widehat{S}$, let $\map{\nu_{\tet} \cdot \tau}{\widetilde{L}_{G(q^{n})}(\tet)}{\cx}$ be defined by $(\nu_{\tet} \cdot \tau)(gF) = \nu_{\tet}(g)\tau(F)$ for all $g \in L_{G(q^{n})}(\tet)$. Since $\tet$ is $F$-invariant, it is not hard to check that $\nu_{\tet} \cdot \tau$ is a linear character of $\widetilde{L}_{G(q^{n})}(\tet)$, and that $$\ind_{L_{G(q^{n})}(\tet)}^{\widetilde{L}_{G(q^{n})}(\tet)}(\nu_{\tet}) = \sum_{\tau \in \widehat{S}} \nu_{\tet} \cdot \tau.$$ It follows that $$\ind_{G(q^{n})}^{\widetilde{G}(q^{n})}(\xi_{\tet}) = \sum_{\tau \in \widehat{S}} \ind_{\widetilde{L}_{G(q^{n})}(\tet)}^{\widetilde{G}(q^{n})}(\nu_{\tet} \cdot \tau).$$ Let $\map{\xi_{\tet} \cdot \tau}{\widetilde{G}(q^{n})}{\cx}$ be defined by $(\xi_{\tet} \cdot \tau)(g\sig) = \xi_{\tet}(g)\tau(\sig)$ for all $g \in G(q^{n})$. Then $$\xi_{\tet} \cdot \tau = \big( \ind_{L_{G(q^{n})}(\tet)}^{G(q^{n})}(\nu_{\tet}) \big) \cdot \tau = \ind_{\widetilde{L}_{G(q^{n})}(\tet)}^{\widetilde{G}(q^{n})}(\nu_{\tet} \cdot \tau)$$ for all $\tau \in \widehat{S}$, and thus $$\ind_{G(q^{n})}^{\widetilde{G}(q^{n})}(\xi_{\tet}) = \sum_{\tau \in \widehat{S}} \xi_{\tet} \cdot \tau.$$ In particular, $\widetilde{\xi}_{\tet} = \xi_{\tet} \cdot 1_{S}$ is an extension to $\widetilde{G}(q^{n})$ of $\xi_{\tet} \in \SCh(G(q^{n}))$, and we deduce that $$\xi_{\tet}(g\inv hF(g)) = \widetilde{\xi}_{\tet}(g\inv hF(g)F) = \widetilde{\xi}_{\tet}(g\inv (hF)g) = \widetilde{\xi}_{\tet}(hF) = \xi_{\tet}(h)$$ for all $g,h \in G(q^{n})$. Thus, $\xi_{\tet}$ takes a constant value on each $F$-conjugacy class of $G(q^{n})$, that is, $\xi_{\tet} \in \cf_{F}(G(q^{n}))$. Indeed, we have the following result.

\begin{lemma} \label{extension}
If $\tet \in \AC(q^{n})$ is $F$-invariant, then $$\xi_{\tet}(g) = \frac{1}{|L_{G(q^{n})}(\tet)|} \sum_{x\in G(q^{n})} (\nu_{\tet})^{\circ}(x\inv gF(x))$$ for all $g \in G(q^{n})$.
\end{lemma}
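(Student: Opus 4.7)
The plan is to exploit the extension $\widetilde{\xi}_{\tet} = \xi_{\tet}\cdot 1_{S}$ of $\xi_{\tet}$ to $\widetilde{G}(q^{n})$ constructed in the discussion just above the lemma, combined with the fact (also shown there) that this extension equals $\ind_{\widetilde{L}_{G(q^{n})}(\tet)}^{\widetilde{G}(q^{n})}(\nu_{\tet}\cdot 1_{S})$. Since $\widetilde{\xi}_{\tet}(gF) = \xi_{\tet}(g)\cdot 1_{S}(F) = \xi_{\tet}(g)$ for every $g \in G(q^{n})$, it suffices to evaluate $\widetilde{\xi}_{\tet}(gF)$ using the standard induced character formula on $\widetilde{G}(q^{n})$.

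Writing $\widetilde{G}(q^{n}) = \bigsqcup_{i=0}^{n-1} G(q^{n})F^{i}$ and noting that $|\widetilde{L}_{G(q^{n})}(\tet)| = n\,|L_{G(q^{n})}(\tet)|$, I would parametrise elements of $\widetilde{G}(q^{n})$ as $y = xF^{i}$ with $x \in G(q^{n})$ and $0 \leq i \leq n-1$. The key computation is the conjugation identity in the semidirect product: using $Fg = F(g)F$ repeatedly one obtains
\begin{equation*}
y\inv (gF) y \;=\; F^{-i}\bigl(x\inv g F(x)\bigr)\cdot F,
\end{equation*}
so that $(\nu_{\tet}\cdot 1_{S})^{\circ}(y\inv gF y) = (\nu_{\tet})^{\circ}\bigl(F^{-i}(x\inv g F(x))\bigr)$.

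At this point the $F$-invariance hypothesis enters: since $\tet$ is $F$-invariant and $\nu_{\tet}(h) = \tet(h-1)$, the character $\nu_{\tet}$ is $F$-invariant on the $F$-stable subgroup $L_{G(q^{n})}(\tet)$, so its extension-by-zero $(\nu_{\tet})^{\circ}$ is $F$-invariant on all of $G(q^{n})$. This removes the $F^{-i}$ from the argument and makes the inner sum independent of $i$, producing a factor of $n$ that cancels the $n$ in $|\widetilde{L}_{G(q^{n})}(\tet)|$. What remains is exactly the claimed formula.

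I do not expect any serious obstacle: the only care required is the bookkeeping of the semidirect product multiplication (checking the identity for $y\inv(gF)y$), and the observation that $F$-invariance of $\tet$ passes to $(\nu_{\tet})^{\circ}$. As a byproduct of the formula, one immediately sees that $\xi_{\tet}(g\inv h F(g)) = \xi_{\tet}(h)$ by the substitution $x \mapsto gx$, reconfirming that $\xi_{\tet} \in \cf_{F}(G(q^{n}))$, which was already noted informally above.
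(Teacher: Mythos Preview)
Your proposal is correct and follows essentially the same route as the paper's proof: both evaluate $\widetilde{\xi}_{\tet}(gF)$ via the induced character formula on $\widetilde{G}(q^{n})$, derive the conjugation identity $(xF^{i})^{-1}(gF)(xF^{i}) = F^{-i}(x^{-1}gF(x))\cdot F$, and then use the $F$-invariance of $\tet$ (hence of $L_{G(q^{n})}(\tet)$ and $\nu_{\tet}$) to make the summand independent of $i$, cancelling the factor $n$ against $|\widetilde{L}_{G(q^{n})}(\tet)| = n\,|L_{G(q^{n})}(\tet)|$.
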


\begin{proof}
Let $\tet \in \AC(q^{n})$ be $F$-invariant, and consider the extension $\widetilde{\nu}_{\tet} = \nu_{\tet} \cdot 1_{S}$ of $\nu_{\tet}$ to $\widetilde{L}_{G(q^{n})}(\tet)$. Then $\widetilde{\xi}_{\tet} = \ind_{\widetilde{L}_{G(q^{n})}(\tet)}^{\widetilde{G}(q^{n})} (\widetilde{\nu}_{\tet})$ is an extension of $\xi_{\tet}$ to $\widetilde{G}(q^{n})$. Let $g \in G(q^{n})$ be arbitrary. Then $\xi_{\tet}(g) = \widetilde{\xi}_{\tet}(gF)$, and thus $$\xi_{\tet}(g) = \frac{1}{|\widetilde{L}_{G(q^{n})}(\tet)|} \sum_{1 \leq m \leq n} \sum_{x\in G} (\widetilde{\nu}_{\tet})^{\circ}((xF^{m})\inv (gF)(xF^{m})).$$ For every $1 \leq m \leq n$ and every $x \in G$, $$(xF^{m})\inv (gF)(xF^{m}) = \big(F^{-m}x\inv gF(x)F^{m}\big)F = F^{-m}(x\inv gF(x))\cdot F,$$ and thus $(xF^{m})\inv (gF)(xF^{m}) \in \widetilde{L}_{G(q^{n})}(\tet)$ if and only if $F^{-m}(x\inv gF(x)) \in L_{G(q^{n})}(\tet)$. Since $L_{G(q^{n})}(\tet)$ is $F$-invariant and $$\widetilde{\nu}_{\tet}(F^{-m}(x\inv gF(x))\cdot F) = \nu_{\tet}(F^{-m}(x\inv gF(x))) = \nu_{\tet}(x\inv gF(x))$$ for all $1 \leq m \leq n$ and all $x\in G$ such that $x\inv gF(x) \in L_{G(q^{n})}(\tet)$, we conclude that
\begin{align*}
\xi_{\tet}(g) &= \frac{1}{|\widetilde{L}_{G(q^{n})}(\tet)|} \sum_{1 \leq m \leq n} \sum_{x\in G} (\nu_{\tet})^{\circ}(x\inv gF(x)) \\ &= \frac{1}{|L_{G(q^{n})}(\tet)|} \sum_{x\in G} (\nu_{\tet})^{\circ}(x\inv gF(x)),
\end{align*}
as required.
\end{proof}

We next recall the definition of the \textit{norm map} $\map{\nm_{n}}{\Cl_{F}(G(q^{n}))}{\Cl(G(q))}$. Let $\CK \in \Cl_{F}(G(q^{n}))$ be an $F$-conjugacy class of $G(q^{n})$, and let $g \in \CK$ be arbitrary. By Lang's theorem (see \cite[Corollary~V.16.4]{Borel1991a}), there exists $h \in G$ such that $g = h\inv F(h)$. Then $F^{n}(h)h\inv \in G(q)$, and we define $\nm_{n}(\CK) \in \Cl(G)$ to be the conjugacy class $$\nm_{n}(\CK) = \set{x\inv(F^{n}(h)h\inv) x}{x \in G(q)}$$ of $G(q)$. We observe that $F^{n}(h)h\inv = h\nm_{n}(g)h\inv$ where $$\nm_{n}(g) = g F(g) F^{2}(g) \cdots F^{n-1}(g);$$ however, $\nm_{n}(g) \in G(q^{n})$ does not necessarily lie in $G(q)$. It can be shown that $\nm_{n}(\CK)$ is independent of the choices of $g \in \CK$ and of $h \in G$ such that $g = h\inv F(h)$, and that we get a well-defined bijection $\map{\nm_{n}}{\Cl_{F}(G(q^{n}))}{\Cl(G(q))}$ given by the mapping $\CK \mapsto \nm_{n}(\CK)$ for $\CK \in \Cl_{F}(G(q^{n}))$. For a detailed proof, see for example \cite[Lemma~2.2]{Kawanaka1977a}.

The mapping $\vphi \mapsto \vphi \circ \nm_{n}\inv$ clearly defines a $\C$-linear isomorphism $$\map{\Sh_{n}}{\cf_{F}(G(q^{n}))}{\cf(G(q))}$$ which we call the \textit{Shintani descent} from $G(q^{n})$ to $G(q)$. By \cite[Lemma~1.10]{Shoji1992a}, we know that the Shintani descent is an isometry in the sense that $$\frob{\Sh_{n}(\vphi)}{\Sh_{n}(\psi)}_{G(q)} = \frob{\vphi}{\psi}_{G(q^{n})}$$ for all $\vphi,\psi \in \cf_{F}(G(q^{n}))$. In particular, if we denote by $\AC(q^{n})^{F}$ the set of all $F$-invariant characters in $\AC(q^{n})$, then \refl{extension} and the orthogonality of supercharacters imply that $\set{\Sh_{n}(\xi_{\tet})}{\tet \in \AC(q^{n})^{F}}$ is an orthogonal subset of $\cf(G(q))$. Our goal is to prove that this set equals the set $\SCh(G(q))$ of supercharacters of $G(q)$, and that $\Sh_{n}$ defines by restriction a bijection $\map{\Sh_{n}}{\SCh(G(q^{n}))^{F}}{\SCh(G(q))}$ where $\SCh(G(q^{n}))^{F}$ denotes the set of $F$-invariant supercharacters of $G(q^{n})$. In particular, we see that $\Sh_{n}(\SCh(G(q^{n}))^{F}) = \SCh(G(q))$ is an orthogonal $\C$-basis of the vector space $\scf(G(q))$ of all superclass functions of $G(q)$.

Our next result establishes that in fact $\SCh(G(q^{n}))^{F} = \set{\Sh_{n}(\xi_{\tet})}{\tet \in \AC(q^{n}))^{F}}$. For the proof, it is convenient to realise $\AC(q^{n})$ as the set $\set{\tau \circ f}{f \in \CA^{\ast}(q^{n})}$ where $\map{\tau}{(\F_{q^{n}})^{+}}{\cx}$ is an arbitrarily fixed non-trivial character of the additive group of $\F_{q^{n}}$ and $\CA^{\ast}(q^{n})$ denotes the dual $\F_{q^{n}}$-vector space of $\CA(q^{n})$. Indeed, it is well-known that the mapping $f \mapsto \tau \circ f$ defines a group isomorphism $\CA^{\ast}(q^{n}) \cong \AC(q^{n})$. This isomorphism is clearly $\Gam(q^{n})$-equivariant where we consider the natural action of $\Gam(q^{n}) = G(q^{n})\x G(q^{n})$ on the left of $\CA^{\ast}(q^{n})$ given by $((g,h)\cdot f) = f(g\inv ah)$ for all $g,h \in G(q^{n})$, all $f \in \CA^{\ast}(q^{n})$ and all $a \in \CA(q^{n})$. Therefore, for every $f \in \CA^{\ast}(q^{n})$, the two-sided $G(q^{n})$-orbit $G(q^{n}) (\tau \circ f) G(q^{n})$ of $\tau \circ f \in \AC(q^{n})$ is given by $$\Gam(q^{n}) \cdot (\tau \circ f) = \tau \circ (\Gam(q^{n}) \cdot f) = \set{\tau \circ (x \cdot f)}{x \in \Gam(q^{n})};$$ notice that $\tau \circ (x \cdot f) = x \cdot (\tau \circ f)$ for all $x \in  \Gam(q^{n})$. Finally, we observe that, if $\CA^{\ast}$ denotes the dual $\k$-vector space of $\CA$, then the Frobenius map $\map{F^{n}}{\CA}{\CA}$ induces a Frobenius map $\map{F^{n}}{\CA^{\ast}}{\CA^{\ast}}$ (given by the mapping $f \mapsto f \circ F^{n}$), so that $\CA^{\ast}(q^{n})$ identifies with the subspace $(\CA^{\ast})^{F^{n}} = \set{f \in \CA^{\ast}}{F^{n}(f) = f}$ of $\CA^{\ast}$ consisting of $F^{n}$-fixed elements. Moreover, the group $\Gam = G \x G$ acts naturally on $\CA^{\ast}$, and thus, for every $f \in \CA^{\ast}(q^{n})$, we may consider the subset $(\Gam \cdot f)^{F^{n}}$ consisting of all $F^{n}$-fixed elements of the $\Gam$-orbit $\Gam \cdot f$; we observe that this subset is clearly $\Gam(q^{n})$-invariant, and hence it is a disjoint union of $\Gam(q^{n})$-orbits.

\begin{proposition} \label{invariant}
Let $\xi \in \SCh(G(q^{n}))$ be an arbitrary supercharacter of $G(q^{n})$. Then $\xi^{F} = \xi \circ F$ is also a supercharacter of $G(q^{n})$, and $\xi$ is $F$-invariant if and only if there exists an $F$-invariant character $\tet \in \AC(q^{n})^{F}$ such that $\xi = \xi_{\tet}$.
\end{proposition}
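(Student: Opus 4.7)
The plan is to reduce the proposition to formula (2.2) and two applications of Lang--Steinberg. First I would establish the functoriality relation $\xi_{\tet} \circ F = \xi_{\tet \circ F}$, from which the first assertion follows immediately. Using (2.2) together with $F(g)-1 = F(g-1)$, I compute
$$\xi_{\tet}(F(g)) = \frac{|G(q^{n})\tet|}{|G(q^{n})\tet G(q^{n})|} \sum_{\tet' \in G(q^{n})\tet G(q^{n})} (\tet' \circ F)(g-1).$$
A short calculation gives $(g\tet'h) \circ F = F\inv(g)(\tet' \circ F) F\inv(h)$ for all $g,h \in G(q^{n})$, so $\tet' \mapsto \tet' \circ F$ is a bijection from $G(q^{n})\tet G(q^{n})$ onto $G(q^{n})(\tet \circ F) G(q^{n})$, and an analogous calculation shows $L_{G(q^{n})}(\tet \circ F) = F\inv(L_{G(q^{n})}(\tet))$, so the prefactor is unchanged. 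Hence the right-hand side equals $\xi_{\tet \circ F}(g)$, and we conclude that $\xi^{F}$ is always a supercharacter and that the $F$-action on $\SCh(G(q^{n}))$ corresponds, via $\tet \mapsto \xi_{\tet}$, to the action $\tet \mapsto \tet \circ F$ on the set of $\Gam(q^{n})$-orbits of $\AC(q^{n})$.

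Now suppose $\xi = \xi_{\tet}$ is $F$-invariant; the functoriality just proved yields $\tet \circ F \in \Gam(q^{n}) \cdot \tet$. I then pass to the dual vector space, writing $\tet = \tau \circ f$ for some $f \in \CA^{\ast}(q^{n}) = (\CA^{\ast})^{F^{n}}$. Under this identification the two-sided action of $\Gam(q^{n})$ on $\AC(q^{n})$ corresponds to its natural action on $\CA^{\ast}(q^{n})$, and precomposition with $F$ corresponds to the induced Frobenius on $\CA^{\ast}$ (sending $f$ to $f \circ F$). The hypothesis therefore becomes $F(f) \in \Gam(q^{n}) \cdot f$, so the $\Gam$-orbit $\mathcal{O} := \Gam \cdot f \sset \CA^{\ast}$ is $F$-stable as a set.

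To finish I must produce an $F$-fixed point of $\mathcal{O}$ that lies in $\Gam(q^{n}) \cdot f$; then $\tet' := \tau \circ f' \in \AC(q^{n})^{F}$ will satisfy $\xi_{\tet'} = \xi_{\tet}$. Here I would apply Lang--Steinberg twice. First, since $\Gam = G \x G$ is a connected unipotent algebraic group over $\k$, writing $F(f) = y \cdot f$ for some $y \in \Gam$ and solving $g\inv F(g) = y\inv$ via Lang--Steinberg produces an element $f' := g \cdot f$ which a direct check shows is $F$-fixed. Second, by the obvious $\k$-analogue of \refr{centraliser}, the stabiliser $\Gam_{f} = 1 + \CC_{\Gam}(f)$ is connected; applying Lang--Steinberg to $\Gam_{f}$ with respect to $F^{n}$ then shows that $\mathcal{O}^{F^{n}}$ consists of a single $\Gam(q^{n})$-orbit, to which $f' \in \mathcal{O}^{F} \sset \mathcal{O}^{F^{n}}$ must belong. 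I expect the main delicacy to lie in this final matching step: producing some $F$-fixed point in $\mathcal{O}$ is the direct content of a single Lang--Steinberg application, but pinning it down to the same $\Gam(q^{n})$-orbit as the original $\tet$ requires connectedness of the stabiliser and thus a separate Lang--Steinberg argument.
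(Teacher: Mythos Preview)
Your proposal is correct and follows essentially the same strategy as the paper's proof. The only notable variation is in the first assertion: you compute $\xi_{\tet}\circ F = \xi_{\tet\circ F}$ directly from \refeq{superchar} by transporting the sum over the $\Gam(q^{n})$-orbit of $\tet$, whereas the paper instead invokes the dual formula \cite[Theorem~5.8]{Diaconis2008a} (a sum over the superclass of $a$) and uses $F(G(q^{n})aG(q^{n})) = G(q^{n})F(a)G(q^{n})$; both routes are equally short. For the converse, your two Lang--Steinberg applications match the paper exactly: the paper cites \cite[Corollary~V.16.5]{Borel1991a} for the existence of an $F$-fixed point in the $\Gam$-orbit, and then argues (via \cite[Lemma~2.5]{Srinivasan1979a} and connectedness of the algebra-group stabiliser $C_{\Gam}(f)$) that $(\Gam\cdot f)^{F^{n}}$ is a single $\Gam(q^{n})$-orbit, precisely as you outlined.
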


\begin{proof}
Let $\tet \in \AC(q^{n})$ be such that $\xi = \xi_{\tet}$, and recall from \cite[Theorem~5.8]{Diaconis2008a} that $$\xi_{\tet}(1+a) = \frac{|G(q^{n})\tet|}{|G(q^{n}) a G(q^{n})|} \sum_{b \in G(q^{n}) a G(q^{n})} \tet(b)$$ for all $a \in \CA(q^{n})$. Since $F(G(q^{n}) a G(q^{n})) = G(q^{n}) F(a) G(q^{n})$, we conclude that $$\xi_{\tet}(1+F(a)) = \frac{|G(q^{n})\tet|}{|G(q^{n}) a G(q^{n})|} \sum_{b \in G(q^{n}) a G(q^{n})} \tet(F(b)) = \xi_{\tet \circ F}(a)$$ for all $a \in \CA(q^{n})$, and thus $\xi^{F} = \xi_{\tet\circ F}$ is also a supercharacter of $G(q^{n})$. Moreover, if $\tet$ is $F$-invariant, then $\tet \circ F = \tet$, and so $(\xi_{\tet})^{F} = \xi_{\tet\circ F} = \xi_{\tet}$ is also $F$-invariant.

Conversely, let $\tet \in \AC(q^{n})$ be such that $\xi_{\tet}$ is $F$-invariant, that is, $\xi_{\tet} \circ F = \xi_{\tet}$. Since $\xi_{\tet} \circ F = \xi_{\tet \circ F}$, we must have $\tet \circ F \in G(q^{n}) \tet G(q^{n})$, and thus the two-sided orbit $G(q^{n}) \tet G(q^{n})$ is $F$-invariant. Let $f \in \CA^{\ast}(q^{n})$ be such that $\tet = \tau \circ f$, and consider the $\Gam$-orbit $\Gam \cdot f$. Since $G(q^{n})\tet G(q^{n})$ is $F$-invariant, it is clear that $\Gam(q^{n}) \cdot f$ is $F$-invariant, and thus the $\Gam$-orbit $\Gam \cdot f  \sset \CA^{\ast}$ is also $F$-invariant. Since $\Gam$ is connected,  \cite[Corollary~V.16.5]{Borel1991a} implies that there is $x \in \Gam$ such that $F(x \cdot f) = x\cdot f$. We claim that $x\cdot f \in \Gam(q^{n}) \cdot f$ which will imply that the character $\tau \circ (x\cdot f) \in \AC(q^{n})$ is $F$-invariant and lies in $\Gam(q^{n}) \cdot \tet = G(q^{n}) \tet G(q^{n})$.

To prove our claim, we consider the $\Gam(q^{n})$-orbit $\Gam(q^{n}) \cdot (x\cdot f) \sset (\Gam \cdot f)^{F^{n}}$ which contains $x\cdot f$; we note that $x\cdot f \in (\CA^{\ast})^{F^{n}}$. Since the $\Gam(q^{n})$-orbits $\Gam(q^{n}) \cdot f$ and $\Gam(q^{n}) \cdot (x\cdot f)$ are both contained in $(\Gam \cdot f)^{F^{n}}$, it is enough to prove that $(\Gam \cdot f)^{F^{n}}$ is indeed a single $\Gam(q^{n})$-orbit. To see this, we start by noting that $x\cdot f = F^{n}(x\cdot f) = F^{n}(x)\cdot f$, and thus $x\inv F^{n}(x) \in C_{\Gam}(f)$ where $C_{\Gam}(f)$ denotes the centraliser of $f$ in $\Gam$. Let $y \in \Gam$ be such that $F^{n}(y\cdot f) = y \cdot f$ and suppose that there exists $z \in \Gam(q^{n})$ such that $y \cdot f = z \cdot (x \cdot f)$. Then $y \cdot f = (zx) \cdot f$, and hence $y\inv zx \in C_{\Gam}(f)$. Furthermore, $$y\inv zx(x\inv F^{n}(x))F((y\inv zx)\inv) = y\inv z F(z\inv)F(y) = y\inv F(y)$$ which means that the elements $x\inv F(x)$ and $y\inv F(y)$ of $C_{\Gam}(f)$ are $F^{n}$-conjugate. It follows that there is a bijection between $\Gam(q^{n})$-orbits on $(\Gam \cdot f)^{F^{n}}$ and $F^{n}$-conjugacy classes of $C_{\Gam}(f)$. By \cite[Lemma~2.5]{Srinivasan1979a}, we know that $F^{n}$-conjugacy classes of $C_{\Gam}(f)$ are in one-to-one correspondence with $F^{n}$-conjugacy classes of the quotient group $C_{\Gam}(f) \slash C_{\Gam}(f)^{\circ}$ where $C_{\Gam}(f)^{\circ}$ denotes the connected component of $C_{\Gam}(f)$. As in \refr{centraliser}, we see that $C_{\Gam}(f)$ is an algebra subgroup of $\Gam$, and hence it is connected. It follows that there is a unique $F^{n}$-conjugacy class in $C_{\Gam}(f)$, and thus there is a unique $\Gam(q^{n})$-orbit on $(\Gam \cdot f)^{F^{n}}$. Therefore, $\Gam(q^{n}) \cdot (x\cdot f) = \Gam(q^{n}) \cdot f$, and thus $x\cdot f \in \Gam(q^{n}) \cdot f$, as claimed.

To conclude the proof, it is enough to observe that $\tet' = \tau \circ (x\cdot f)$ lies in $G(q^{n})\tet G(q^{n})$, and hence $\xi_{\tet'} = \xi_{\tet}$.
 \end{proof}

Next, we identify the $F$-invariant characters in $\AC(q^{n})$. Let $\map{\tr_{n}}{\CA(q^{n})}{\CA(q)}$ denote the trace map defined by $$\tr_{n}(a) = a + F(a) + F^{2}(a) + \cdots + F^{n-1}(a)$$ for all $a \in \CA(q^{n})$. It follows from Lang's theorem that $\map{\tr_{n}}{\CA(q^{n})}{\CA(q)}$ is surjective, and thus the mapping $\tau\mapsto \tau \circ \tr_{n}$ defines an injective group homomorphism $\map{\widehat{\tr}_{n}}{\AC(q)}{\AC(q^{n})}$. Furthermore, since the (affine) algebraic group $\CA^{+}$ is connected, this map induces a group isomorphism $\iso{\widehat{\tr}_{n}}{\AC(q)}{\AC(q^{n})^{F}}$ where $\AC(q^{n})^{F} = \set{\tet \in \AC(q^{n})}{\tet \circ F = \tet}$. (For a proof see \cite[Section~4]{Lusztig2006a}.) In particular, we conclude that
\begin{equation} \label{invar1}
\AC(q^{n})^{F} = \set{\tau \circ \tr_{n}}{\tau \in \AC(q)}.
\end{equation}

\begin{proposition}
Let $\tet \in \AC(q^{n})$ be $F$-invariant, and let $\tau \in \AC(q)$ be such that $\tet = \tau \circ \tr_{n}$. Then $$L_{G(q)}(\tau) = L_{G(q^{n})}(\tet)^{F}\quad \text{and} \quad \Sh_{n}(\nu_{\tet}) = \nu_{\tau}$$ where $\map{\Sh_{n}}{\cf_{F}(L_{G(q^{n})}(\tet))}{\cf(L_{G(q)}(\tau))}$ is the Shintani descent.
\end{proposition}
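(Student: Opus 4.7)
The plan is to prove the two assertions in turn.

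For the subgroup equality $L_{G(q)}(\tau) = L_{G(q^n)}(\tet)^F$, I would argue at the level of algebras and show $\CL_{G(q^n)}(\tet)^F = \CL_{G(q)}(\tau)$. For $a \in \CA(q) = \CA(q^n)^F$, the $F$-invariance of $a$ gives $F^i(au) = aF^i(u)$ and hence $\tr_n(au) = a\tr_n(u)$ for every $u \in \CA(q^n)$, so $\tet(au) = \tau(\tr_n(au)) = \tau(a\tr_n(u))$. The surjectivity of $\tr_n$ (noted just above) then turns the condition ``$\tet(au) = 1\all u \in \CA(q^n)$'' into ``$\tau(av) = 1\all v \in \CA(q)$'', which is exactly $a \in \CL_{G(q)}(\tau)$.

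For $\Sh_n(\nu_\tet) = \nu_\tau$, I would realise both centralisers as fixed-point subgroups of a common connected $F$-stable algebraic subgroup of $G$. Using the fixed additive base character to identify $\AC(q^n) \cong \CA^{\ast}(q^n)$, let $f \in \CA^{\ast}(q^n)$ correspond to $\tet$; since $\tet$ is $F$-invariant, $f$ is the restriction of an $F$-equivariant $\k$-linear functional $\CA \to \k$, which I also denote by $f$ (so $f \circ F = (\cdot)^q \circ f$). Set $\CL = \set{a \in \CA}{f(au) = 0\all u \in \CA}$ and $H = 1+\CL$. A short check shows that $\CL$ is an $F$-stable subalgebra and right ideal of $\CA$, so $H$ is a connected $F$-stable algebra subgroup of $G$ with $H^{F^n} = L_{G(q^n)}(\tet)$ and $H^F = L_{G(q)}(\tau)$ (the latter by the first assertion); Shintani descent for the pair $(H^{F^n}, H^F)$ is therefore well-defined.

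Given $g \in L_{G(q)}(\tau)$, Lang's theorem for $H$ yields $h \in H$ with $g = F^n(h)h\inv$, and then $g_0 := h\inv F(h) \in L_{G(q^n)}(\tet)$ is a representative of the $F$-conjugacy class $\nm_n\inv(\CK)$, where $\CK$ is the conjugacy class of $g$. Hence $\Sh_n(\nu_\tet)(g) = \nu_\tet(g_0) = \tet(g_0-1)$, and it remains to prove $\tet(g_0-1) = \tau(g-1) = \nu_\tau(g)$. Writing $h = 1+c$ and $h\inv = 1+c'$ with $c, c' \in \CL$, I expand
\[
g_0 - 1 = F(c) - c + c'F(c) - c'c, \qquad g - 1 = F^n(c) - c + F^n(c)c' - cc',
\]
observe that the mixed terms lie in $\CL\cdot\CA \sset \ker f$, and use $f \circ F = (\cdot)^q \circ f$ together with the $F$-stability of $\CL$ to obtain $f(g_0-1) = f(c)^q - f(c)$ and $f(g-1) = f(c)^{q^n} - f(c)$. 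The telescoping identity $f(c)^{q^n} - f(c) = \sum_{i=0}^{n-1}\bigl(f(c)^{q^{i+1}} - f(c)^{q^i}\bigr)$ exhibits $f(g-1)$ as the field trace $\F_{q^n} \to \fq$ of $f(g_0-1)$, and since the chosen base character on $(\F_{q^n})^+$ is by construction the composition of the base character on $\fq^+$ with this field trace, the desired equality $\tet(g_0-1) = \tau(g-1)$ follows.

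The main technical obstacle is constructing the common algebraic group $H$ and verifying that $\CL_\CA(f)$ has the right compatibility with descent — specifically that $H^{F^n} = L_{G(q^n)}(\tet)$ and $H^F = L_{G(q)}(\tau)$, together with the right-ideal and $F$-stability properties; once these are in place, the identity $\Sh_n(\nu_\tet) = \nu_\tau$ reduces to the direct, if slightly intricate, computation with additive characters above.
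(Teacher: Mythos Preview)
Your proof is correct, and your approach to the second assertion is genuinely different from the paper's.

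For the first assertion, both you and the paper argue via $\tr_{n}(au) = a\tr_{n}(u)$ for $a \in \CA(q)$ together with surjectivity of $\tr_{n}$; the paper phrases this at the group level and you at the algebra level, but there is no real difference.

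For the identity $\Sh_{n}(\nu_{\tet}) = \nu_{\tau}$, both arguments begin by realising $L_{G(q^{n})}(\tet)$ and $L_{G(q)}(\tau)$ as the $F^{n}$- and $F$-fixed points of the same connected algebraic subgroup $H = 1+\CL$ of $G$, and both invoke Lang's theorem there. From that point the routes diverge. The paper computes $\Sh_{n}^{-1}(\nu_{\tau})(g)$ for $g \in L_{G(q^{n})}(\tet)$: it writes $\Sh_{n}^{-1}(\nu_{\tau})(g) = \nu_{\tau}(h\nm_{n}(g)h^{-1})$, invokes a conjugacy result of Springer--Steinberg to replace the $\k$-point $h$ by some $k \in L_{G(q^{n})}(\tet)$, then introduces an auxiliary extension $\widetilde{\tau}$ of $\tau$ to $\CA(q^{n})$ so that $\nu_{\widetilde{\tau}}$ is a genuine linear character on $L_{G(q^{n})}(\tet)$, and finally uses multiplicativity of $\nu_{\widetilde{\tau}}$ on the product $\nm_{n}(g) = gF(g)\cdots F^{n-1}(g)$ to obtain $\tet(g-1)$. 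Your argument instead chooses the linear functional $f$ to be defined over $\fq$ from the start (equivalently, taking the base character on $\F_{q^{n}}$ to be the field-trace lift of $\vsig$), and then exploits the right-ideal property $\CL\cdot\CA \subseteq \ker f$ to kill the cross terms in $g_{0}-1$ and $g-1$, reducing everything to the telescoping identity $\sum_{i}(f(c)^{q^{i+1}}-f(c)^{q^{i}}) = f(c)^{q^{n}}-f(c)$. This is more elementary: it avoids both the Springer--Steinberg conjugacy step and the auxiliary extension $\widetilde{\tau}$, at the cost of an explicit (but short) additive computation and a specific normalisation of the base characters. The paper's approach, by contrast, is more ``multiplicative'' and would transplant more readily to settings where such a convenient linear functional is not available.
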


\begin{proof}
Since $\tr_{n}(ga) = g\tr_{n}(a)$, it is clear that $\tau(\tr_{n}(ga)) = \tau(g\tr_{n}(a))$ for all $g \in G(q)$ and all $a \in \CA(q^{n})$. On one hand, if $g \in L_{G(q)}(\tau)$, then $$\tet(ga) = \tau(\tr_{n}(ga)) = \tau(g\tr_{n}(a)) = \tau(\tr_{n}(a)) = \tet(a),$$ and thus $g \in L_{G(q^{n})}(\tet)^{F}$. On the other hand, let $g \in L_{G(q^{n})}(\tet)^{F}$. Then $g \in G(q)$, and so $$\tau(\tr_{n}(a)) = \tau(\tr_{n}(ga)) = \tau(g\tr_{n}(a))$$ for all $a \in \CA(q^{n})$. Since $\map{\tr_{n}}{\CA(q^{n})}{\CA(q)}$ is surjective, we conclude that $\tau(b) = \tau(gb)$ for all $b \in \CA(q)$, and hence $g \in L_{G(q)}(\tau)$. It follows that $ L_{G(q^{n})}(\tet)^{F} = L_{G(q)}(\tau)$, as required.

Now, let $g \in L_{G(q^{n})}(\tet)$, and let $h \in L_{G}(\tet)$ be such that $g = h\inv F(h)$; here, we set $L_{G}(\tet) = 1+\CL_{G}(\tet)$ where $\CL_{G}(\tet) = \CL_{G(q^{n})}(\tet) \ox_{\F_{q^{n}}} \k$ (hence, $L_{G(q^{n})}(\tet) = L_{G}(\tet)^{F^{n}}$). Since $L_{G(q^{n})}(\tet))^{F} = L_{G(q)}(\tau)$, we have the norm map $\map{\nm_{n}}{\Cl_{F}(L_{G(q^{n})}(\tet))}{\Cl(L_{G(q)}(\tau))}$ which sends the $F$-conjugacy class $\CK \sset L_{G(q^{n})}(\tet)$ of $g$ to the conjugacy class $\nm_{n}(\CK) \sset L_{G(q)}(\tau)$ of $F^{n}(h)h\inv$. By definition of the Shintani descent $\map{\Sh_{n}}{\cf_{F}(L_{G(q^{n})}(\tet))}{\cf(L_{G(q)}(\tau))}$, we see that $\Sh_{n}\inv(\nu_{\tau}) = \nu_{\tau} \circ \nm_{n}$, and so $$\Sh_{n}\inv(\nu_{\tau})(g) = \nu_{\tau}(F^{n}(h)h\inv) = \nu_{\tau}(h\nm_{n}(g)h\inv)$$ where $\nm_{n}(g) = g F(g) \cdots F^{n-1}(g) \in L_{G(q^{n})}(\tet)$. Since $h\nm_{n}(g)h\inv = F^{n}(h)h\inv$ and $\nm_{n}(g)$ are both elements of $L_{G(q^{n})}(\tet)$, \cite[I,~3.4]{Springer1970a} implies that there exists $k \in L_{G(q^{n})}(\tet)$ such that $h\nm_{n}(g)h\inv = k\nm_{n}(g)k\inv$, and thus $$\Sh_{n}\inv(\nu_{\tau})(g) = \nu_{\tau}(k\nm_{n}(g)k\inv).$$

Let $\widetilde{\tau} \in \AC(q^{n})$ be an (arbitrary) extension of $\tau \in \AC(q)$ to $\CA(q^{n})$, and observe that $L_{G(q)}(\tau) \sset L_{G(q^{n})}(\widetilde{\tau})$. Indeed, if $\{\seq{e}{r}\}$ is an $\fq$-basis of $\CA(q)$, then $$\CL_{G(q)}(\tau) = \set{a \in \CA(q)}{\tau(ae_{i}) = 1 \all 1 \leq i \leq r}.$$ Since $\{\seq{e}{r}\}$ is an $\F_{q^{n}}$-basis of $\CA(q^{n})$, $$\CL_{\widetilde{\tau}} = \set{a \in \CA}{\widetilde{\tau}(ae_{i}) = 1 \all 1 \leq i \leq r}.$$ By the choice of $\widetilde{\tau}$, we know that $\widetilde{\tau}(ae_{i}) = \tau(ae_{i}) = 1$ for all $a \in \CL_{G(q)}(\tau)$ and all $1 \leq i \leq r$, and so $\CL_{G(q)}(\tau) \sset \CL_{G(q^{n})}(\widetilde{\tau})$. It follows that $$L_{G(q)}(\tau) = 1+\CL_{G(q)}(\tau) \sset 1+\CL_{G(q^{n})}(\widetilde{\tau}) = L_{G(q^{n})}(\widetilde{\tau}),$$ as required. Since $\nu_{\widetilde{\tau}}(1+a) = \widetilde{\tau}(a) = \tau(a) = \nu_{\tau}(1+a)$ for all $a \in L_{\tau}$, we see that the character $\map{\nu_{\widetilde{\tau}}}{L_{G(q^{n})}(\widetilde{\tau})}{\cx}$ is an extension of $\map{\nu_{\tau}}{L_{G(q)}(\tau)}{\cx}$ to $L_{G(q^{n})}(\widetilde{\tau})$, and thus $$\nu_{\tau}(k\inv\nm_{n}(g)k) = \nu_{\widetilde{\tau}}(k\inv\nm_{n}(g)k).$$

Finally, since $\CL_{G(q^{n})}(\tet)$ is the $\F_{q^{n}}$-linear span of $\CL_{G(q^{n})}(\tet)^{F} = \CL_{G(q)}(\tau)$, we must have $\CL_{G(q^{n})}(\tet) \sset \CL_{G(q^{n})}(\widetilde{\tau})$, and hence $L_{G(q^{n})}(\tet) \sset L_{G(q^{n})}(\widetilde{\tau})$. In particular, both $k$ and $g$ are elements of $L_{G(q^{n})}(\widetilde{\tau})$, and thus we deduce that
\begin{align*}
\nu_{\widetilde{\tau}}(k\inv\nm_{n}(g)k) &= \nu_{\widetilde{\tau}}(\nm_{n}g)) = \nu_{\widetilde{\tau}}(g) \nu_{\widetilde{\tau}}(F(g)) \cdots \nu_{\widetilde{\tau}}(F^{n-1}(g)) \\ &= \widetilde{\tau}(g-1) \widetilde{\tau}(F(g-1)) \cdots \widetilde{\tau}(F^{n-1}(g-1)) \\ &= \widetilde{\tau}\big((g-1) + F(g-1) + \cdots + F^{n-1}(g-1)\big) \\ &= \widetilde{\tau}(\tr_{n}(g-1)) = \tau(\tr_{n}(g-1)) \\ &= \nu_{\tau\circ\tr_{n}}(g) = \nu_{\tet}(g).
\end{align*}
Therefore, $\Sh_{n}\inv(\nu_{\tau}) = \nu_{\tet}$, and hence $\Sh_{n}(\nu_{\tet}) = \nu_{\tau}$, as required.
\end{proof}

In what follows, we determine the Shintani descent $\Sh_{n}(\xi) \in \cf(G(q))$ of an $F$-invariant supercharacter $\xi \in \SCh(G(q^{n}))^{F}$; we recall that $\xi \in \cf_{F}(G)$ (by \refl{extension}). By \refp{invariant}, we know that there exists an $F$-invariant character $\tet \in \AC(q^{n})^{F}$ such that $\xi = \xi_{\tet}$. Moreover, by \refeq{invar1}, $\tet = \tau \circ \tr_{n}$ for some $\tau \in \AC(q)$. We claim that $\Sh_{n}(\xi_{\tet}) = \xi_{\tau}$ or, equivalently, that $$\xi_{\tet} = \xi_{\tau\circ\tr} = \xi_{\tau} \circ \nm_{n}.$$ In fact, since $\xi_{\tau} = \ind_{L_{G(q)}(\tau)}^{G(q)}(\nu_{\tau})$ and since $\nu_{\tau} \circ \nm_{n} = \Sh_{n}\inv(\nu_{\tau}) = \nu_{\tet}$ (by the previous proposition), \cite[Proposition~1.14]{Shoji1992a} implies that $$\xi_{\tau} \circ \nm_{n} = \FInd_{L_{G(q^{n})}(\tet)}^{G(q^{n})}(\nu_{\tau} \circ \nm_{n}) = \FInd_{L_{G(q^{n})}(\tet)}^{G(q^{n})}(\nu_{\tet})$$ where $$\FInd_{L_{G(q^{n})}(\tet)}^{G(q^{n})}(\nu_{\tet})(g) = \frac{1}{|L_{G(q^{n})}(\tet)|} \sum_{x \in G(q^{n})} (\nu_{\tet})^{\circ}(x\inv gF(x))$$ for all $g \in G$. By \refl{extension}, we conclude that $\FInd_{L_{G(q^{n})}(\tet)}^{G(q^{n})}(\nu_{\tet}) = \xi_{\tet}$, and thus $\xi_{\tet} = \xi_{\tau} \circ \nm_{n} = \Sh_{n}\inv(\xi_{\tau})$, as claimed. This completes the proof of the following main result.

\begin{theorem} \label{main}
The Shintani descent $\map{\Sh_{n}}{\cf_{F}(G(q^{n}))}{\cf(G(q))}$ restricts to a bijection $\map{\Sh_{n}}{\SCh(G(q^{n}))^{F}}{\SCh(G(q))}$. Under this bijection, $\Sh_{n}(\xi_{\tet}) = \xi_{\tau}$ where $\tet \in \AC(q^{n})^{F}$ and $\tau \in \AC(q)$ is such that $\tet = \tau \circ \tr_{n}$.
\end{theorem}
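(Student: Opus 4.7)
The plan is to combine the preceding proposition (which identifies $F$-invariant supercharacters and links $\nu_{\tet}$ to $\nu_{\tau}$ under Shintani descent) with \refl{extension} and the twisted induction machinery of Shoji. By \refp{invariant}, every $F$-invariant supercharacter of $G(q^{n})$ has the form $\xi_{\tet}$ for some $F$-invariant $\tet \in \AC(q^{n})$. Moreover, by \refeq{invar1}, any such $\tet$ can be written uniquely as $\tet = \tau \circ \tr_{n}$ for some $\tau \in \AC(q)$. So the task reduces to showing that $\Sh_{n}(\xi_{\tet}) = \xi_{\tau}$, or equivalently $\xi_{\tet} = \xi_{\tau} \circ \nm_{n}$ as elements of $\cf_{F}(G(q^{n}))$.

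First I would note that $\xi_{\tet} \in \cf_{F}(G(q^{n}))$ by \refl{extension}, so the equation $\xi_{\tet} = \xi_{\tau} \circ \nm_{n}$ makes sense; both sides are $F$-class functions. Since $\xi_{\tau} = \ind_{L_{G(q)}(\tau)}^{G(q)}(\nu_{\tau})$, the next step is to pass induction through the norm map. This is precisely what \cite[Proposition~1.14]{Shoji1992a} does: if $H(q) \sset G(q)$ corresponds via $H(q) = H(q^{n})^{F}$ to an $F$-stable algebra subgroup $H(q^{n})$ of $G(q^{n})$, then for any $F$-class function $\vphi$ on $H(q^{n})$ one has $\ind_{H(q)}^{G(q)}(\Sh_{n}(\vphi)) \circ \nm_{n} = \FInd_{H(q^{n})}^{G(q^{n})}(\vphi)$, where $\FInd$ denotes the twisted induction. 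Applying this with $H(q^{n}) = L_{G(q^{n})}(\tet)$ (which is $F$-stable since $\tet$ is $F$-invariant, and whose fixed subgroup is $L_{G(q)}(\tau)$ by the previous proposition), and with $\vphi = \nu_{\tet} = \Sh_{n}\inv(\nu_{\tau})$, one obtains
\[
\xi_{\tau} \circ \nm_{n} = \FInd_{L_{G(q^{n})}(\tet)}^{G(q^{n})}(\nu_{\tet}).
\]

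Finally, the right-hand side is given explicitly by
\[
\FInd_{L_{G(q^{n})}(\tet)}^{G(q^{n})}(\nu_{\tet})(g) = \frac{1}{|L_{G(q^{n})}(\tet)|} \sum_{x \in G(q^{n})} (\nu_{\tet})^{\circ}(x\inv g F(x)),
\]
which is exactly the formula for $\xi_{\tet}(g)$ established in \refl{extension}. Therefore $\xi_{\tau} \circ \nm_{n} = \xi_{\tet}$, i.e.\ $\Sh_{n}(\xi_{\tet}) = \xi_{\tau}$. The map $\tau \mapsto \tau \circ \tr_{n}$ is a bijection $\AC(q) \to \AC(q^{n})^{F}$, and by \refp{invariant} the assignment $\tet \mapsto \xi_{\tet}$ surjects onto $\SCh(G(q^{n}))^{F}$ (with the fibers being two-sided $G(q^{n})$-orbits); the isometry property of $\Sh_{n}$ together with orthogonality of supercharacters guarantees that the induced map $\SCh(G(q^{n}))^{F} \to \SCh(G(q))$ is a bijection.

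The main obstacle, and the reason the proof hangs together, is the correct matching of the twisted-induction formula from Shoji with the explicit formula from \refl{extension}; once that identification is in place, everything else is bookkeeping. A secondary subtlety is checking that the parametrisation by $\tau \in \AC(q)$ is compatible with the identification of supercharacters up to two-sided orbits, but this is already handled by \refp{invariant}.
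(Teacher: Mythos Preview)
Your proposal is correct and follows essentially the same route as the paper's own argument: reduce to $\Sh_{n}(\xi_{\tet}) = \xi_{\tau}$ via \refp{invariant} and \refeq{invar1}, invoke \cite[Proposition~1.14]{Shoji1992a} together with the previous proposition's identification $\Sh_{n}(\nu_{\tet}) = \nu_{\tau}$ to obtain $\xi_{\tau}\circ\nm_{n} = \FInd_{L_{G(q^{n})}(\tet)}^{G(q^{n})}(\nu_{\tet})$, and then match this with $\xi_{\tet}$ using \refl{extension}. Your closing remarks on bijectivity (via the isometry of $\Sh_{n}$ and orthogonality of supercharacters) make explicit what the paper leaves implicit.
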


\section{The superdual of $G$}

Shintani descent allows us to associate with the algebra group $G$ the \textit{superdual} $\scf(G)$ which encodes the (standard) supercharacter theories of all the finite algebra groups $G(q^{n})$ for $n \in \N$. For every $n \in \N$, let $\scf(G(q^{n}))$ denote the complex vector space consisting of all superclass functions of $G(q^{n})$, and recall that $\SCh(G(q^{n}))$ is a orthogonal basis of $\scf(G(q^{n}))$.

Let $m,n \in \N$ be such that $m \mid n$, let $\map{\Sh_{n,m}}{\cf_{F}(G(q^{n}))}{\cf(G(q^{m}))}$ denote the Shintani descent from $G(q^{n})$ to $G(q^{m})$, and let $\nm^{\ast}_{n,m} = \Sh_{n,m}\inv$ denote its inverse. Hence, $\map{\nm^{\ast}_{n,m}}{\cf(G(q^{m}))}{\cf_{F}(G(q^{n}))}$ is defined by the mapping $\psi \mapsto \psi \circ \nm_{n,m}$ where $\map{\nm_{n,m}}{\Cl_{F}(G(q^{n}))}{\Cl(G(q^{m}))}$ is the norm map. By \reft{main}, $\nm^{\ast}_{n,m}$ induces a bijection $\map{\nm^{\ast}_{n,m}}{\SCh(G(q^{m}))}{\SCh(G(q^{n}))^{F}}$, and hence an injective $\C$-linear map $\map{\nm^{\ast}_{n,m}}{\scf(G(q^{m}))}{\scf(G(q^{n}))}$.

It is straightforward to check that $$\nm^{\ast}_{n,m'} \circ \nm^{\ast}_{m',m} = \nm^{\ast}_{n,m}$$ whenever $m,m',n \in \N$ satisfy $m \mid m' \mid n$. Therefore, with respect to the transition maps $\map{\nm^{\ast}_{n,m}}{\scf(G(q^{m}))}{\scf(G(q^{n}))}$ whenever $m,n \in \N$ are such that $m \mid n$, the vector spaces $\scf(G(q^{n}))$, for $n \in \N$, form an inductive system, and thus we may define $\scf(G)$ to be the direct limit $$\scf(G) = \varinjlim_{n \in \N} \scf(G(q^{n})).$$ Hence, the vector space $\scf(G)$ must be regarded as the quotient of the disjoint union $\bigsqcup_{n \in \N} \scf(G(q^{n}))$ by the equivalence relation $\sim$ generated by the relation $$\vphi \sim \vphi' \quad \iff \quad \vphi' = \nm^{\ast}_{n,m}(\vphi) = \vphi \circ \nm_{n,m}$$ for all $m,n \in \N$ with $m \mid n$, all $\vphi \in \scf(G(q^{m}))$ and all $\vphi' \in\scf(G(q^{n}))$. Thus, $$\scf(G) = \set{[\vphi]}{\vphi \in \scf(G(q^{n})),\ n \in \N}$$ where $[\vphi]$ denotes the equivalence class which contains $\vphi \in \scf(G(q^{n}))$ for $n \in \N$. Finally, we note that the vector space $\scf(G)$ has a basis $$\SCh(G) = \set{[\xi]}{\xi \in \scf(G(q^{n})),\ n \in \N}$$ consisting of all equivalence classes of supercharacters of the finite algebra groups $G(q^{n})$ for $n \in \N$.

The set $\SCh(G)$ should not be confused with the set of (standard) supercharacters of $G$. In fact, since $G$ is an infinite group, the representations of $G$ are in general infinite-dimensional, and thus there are obvious obstructions to defining a supercharacter of $G$ as the trace of a representation. Andr\'e and Lochon \cite{Andre2023a} (see also \cite{Andre2018a}) used the Vershik-Kerov ergodic method (as described in \cite{Vershik1974a}; see also \cite[Section~1.5]{Kerov2003a}) to define an alternative notion of the standard normalised supercharacters in the case where $G$ is considered as a topological group with respect the discrete topology, and proved that such a standard normalised supercharacter of $G$ may be approached by standard normalised supercharacters (as defined in \refeq{FSCh}) of the finite algebra groups $G(q^{n})$ for $n \in \N$.  On the other hand, the main theorem proved by Boyarchenko in \cite{Boyarchenko2011a} suggests that known results on the standard supercharacter theory of finite algebra groups may also be valid in the context of unitary representations of locally compact algebra groups where characters should be replaced by unitary representations. In particular, for a general locally compact algebra group $G$, it might be possible to construct a \textit{``unitary super-representation''} of $G$ as a unitary representation induced from a unitary character of an appropriate algebra subgroup of $G$. The terminology ``super-representation'' is motivated by the similarity with the construction of the standard supercharacters of an arbitrary finite algebra group described in \cite[Section~5]{Diaconis2008a}. Indeed, by the results of \cite{Diaconis2008a}, the standard supercharacters of a finite algebra group are in one-to-one correspondence with the isomorphism classes of (unitary) super-representations where by a ``super-representation''  we understand a finite-dimensional representation which affords a standard supercharacter. We should mention that, as in \cite{Diaconis2008a}, our construction of the ``unitary super-representations'' of $G$ is based on a cruder version of Kirillov's orbit method (see \cite{Kirillov2004a}) where we replace coadjoint orbits by double orbits on the dual space of $\CA$.

For our purposes, it is enough to consider unitary representations of $G$ on a (Hausdorff) pre-Hilbert space, that is, a complex vector space endowed with a Hermitian inner product. For simplicity of writing, we prefer to use the equivalent notion of pre-Hilbert (left) modules over the complex group algebra $\C[G]$ of $G$. Thus, a $\C[G]$-module $\CV$ is unitary if $\CV$ is a pre-Hilbert space over $\C$ and if the corresponding representation $\map{\pi}{G}{\mathbf{U}(\CV)}$ is a continuous group homomorphism from $G$ to the group $\mathbf{U}(\CV)$ of unitary linear automorphisms of $\CV$ equipped with the strong operator topology. Here, and henceforth, we assume that the field $\k$ is equipped with a topology which is Hausdorff and such that $\k$ becomes a locally compact topological field. The topology of $\k$ induces naturally a topology in $G$ with respect to which $G$ becomes a locally compact topological group. Similarly, $\k$ induces naturally a topology in $\CA$ with respect to which $\CA$ becomes a locally compact topological ring. Moreover, the mapping $a \mapsto 1+a$ clearly defines a homeomorphism between $\CA$ and $G$. Finally, we note that, since the topology is Hausdorff, every finite fields $\F_{q^{n}}$, for $n \in \N$, are closed subsets of $\k$. Similarly, $G(q^{n})$ and $\CA(q^{n})$, for $n \in \N$, are closed subsets of $G$ and $\CA$, respectively.

Our next goal is to prove \reft{thm:super} where we establish the existence of a one-to-one correspondence between the equivalence classes in $\SCh(G)$ and the isomorphism classes of certain unitary $\C[G]$-modules which we will refer to as the \textit{super $\C[G]$-modules} (see \refeq{super} for the definition).  In order to achieve this, we first recall the definition of the Serre's dual $\AC$ of the additive group $\CA^{+}$ over $\C$. Our main reference is \cite[Section~4]{Lusztig2006a}.

Let $m,n \in \N$ be such that $m \mid n$, and let $\map{\tr_{n,m}}{\CA(q^{n})}{\CA(q^{m})}$ denote the trace map defined by $$\tr_{n,m}(a) = a + F^{m}(a) + F^{2m}(a) + \cdots + F^{n-m}(a)$$ for all $a \in \CA(q^{n})$. It follows from Lang's theorem that $\map{\tr_{n,m}}{\CA(q^{n})}{\CA(q^{m})}$ is surjective, and thus the mapping $\tet \mapsto \tet \circ \tr_{n,m}$ defines an injective group homomorphism $\map{\widehat{\tr}_{n,m}}{\AC(q^{m})}{\AC(q^{n})}$ with image $\widehat{\tr}_{n,m}(\AC(q^{m})) = \AC(q^{n})^{F^{m}}$. Since $\widehat{\tr}_{n,m'} \circ \widehat{\tr}_{m',m} = \widehat{\tr}_{n,m}$ whenever $m,m',n \in \N$ satisfy $m \mid m' \mid n$, the finite groups $\AC(q^{n})$, for $n \in \N$, form an inductive system with respect to the transition maps $\map{\widehat{\tr}_{n,m}}{\AC(q^{m})}{\AC(q^{n})}$ whenever $m,n \in \N$ are such that $m \mid n$, and we define the \textit{Serre's dual} of $\CA$ to be the direct limit $$\AC = \varinjlim_{n \in \N} \AC(q^{n}).$$ As in the case of $\scf(G)$, we view $\AC$ as the quotient of the disjoint union $\bigsqcup_{n \in \N} \AC(q^{n})$ by the equivalence relation $\sim$ generated by the relation $$\tet \sim \tet' \quad \iff \quad \tet' = \tet \circ \tr_{m,n}$$ for all $m,n \in \N$ with $m \mid n$, all $\tet \in \AC(q^{n})$ and all $\tet' \in\AC(q^{m})$. Thus, $$\AC = \set{[\tet]}{\tet \in \AC(q^{n}),\ n \in \N}$$ where $[\tet]$ denotes the equivalence class which contains $\tet \in \AC(q^{n})$ for $n \in \N$.

In what follows, we will show that $\AC$ may be embedded as a dense subspace of the Pontryagin dual $\CA^{\circ} = \Hom(\CA^{+},\C)$ consisting of all unitary characters of $\CA^{+}$. Henceforth, we fix an arbitrary $\fq$-basis $\{\seq{e}{r}\}$ of $\CA(q)$, and let $\{\seq{e^{\ast}}{r}\}$ be its dual basis of dual vector space $\CA^{\ast}(q) = \Hom_{\fq}(\CA(q),\fq)$. On the other hand, we recall that the dual group $\AC(q) = \Hom(\CA^{+},\cx)$ consists of all functions $\map{\vsig \circ f}{\CA(q)}{\cx}$ where $\map{\vsig}{\fq}{\cx}$ is an arbitrarily fixed non-trivial character of $\fq^{+}$ and $f \in \CA^{\ast}(q)$. Moreover, it is not hard to check that $\AC(q)$ has the structure of a vector space over $\fq$ with respect to the scalar multiplication defined by $(\alp \tet)(a) = \tet(\alp a)$ for all $\alp\in \fq$, all $\tet \in \AC(q)$, and all $a \in \CA(q)$, and that $\{\seq{\tet\circ e^{\ast}}{r}\}$ is an $\fq$-basis of $\AC(q)$. We now prove the following elementary result. 

\begin{proposition} \label{vectorspace}
The Serre dual $\AC$ has the structure of a vector space over $\k$ with respect to the scalar multiplication defined by $$\alp [\tet] = [\alp (\tet \circ \tr_{mn,n})]$$ whenever $m,n \in \N$ are such that $\alp\in \F_{q^{m}}$ and $\tet \in \AC(q^{n})$. Furthermore, the set $\{[\vsig \circ e_{1}^{\ast}], \ldots, [\vsig \circ e_{r}^{\ast}]\}$ is a $\k$-basis of $\CA^{\ast}$.
\end{proposition}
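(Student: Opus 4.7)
The plan is to check three things: (a) the proposed scalar multiplication on $\AC$ is well-defined, (b) the vector space axioms hold, and (c) the set $\{[\vsig \circ e_i^{\ast}]\}_{i=1}^{r}$ spans $\AC$ and is linearly independent over $\k$. For (a), suppose $\tet \in \AC(q^n)$ and $\tet' \in \AC(q^{n'})$ are equivalent with $n \mid n'$, so $\tet' = \tet \circ \tr_{n',n}$, and fix $\alp \in \F_{q^m}$. Transitivity of the trace gives
$$(\tet \circ \tr_{mn,n}) \circ \tr_{mn',mn} = \tet \circ \tr_{mn',n} = \tet' \circ \tr_{mn',n'},$$
and since $\tr_{mn',mn}$ is $\F_{q^{mn}}$-linear and hence commutes with multiplication by $\alp \in \F_{q^m} \sset \F_{q^{mn}}$, one obtains
$$\bigl(\alp(\tet \circ \tr_{mn,n})\bigr) \circ \tr_{mn',mn} = \alp(\tet' \circ \tr_{mn',n'}),$$
so the two classes coincide. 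Independence from the choice of $m$ (when $\alp$ lies in several $\F_{q^m}$) follows by the same principle after descending to the smallest such $m$. The axioms in (b) follow from a common-level argument: given $\alp,\bet \in \k$ and classes $[\tet],[\tet']$, pick $n$ large enough that $\alp,\bet \in \F_{q^n}$ and $\tet,\tet'$ have representatives in $\AC(q^n)$, then carry out the verification inside the $\F_{q^n}$-vector space $\AC(q^n)$.

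For (c), fix $[\tet] \in \AC$ with representative $\tet \in \AC(q^n)$. I take $\tau_n = \vsig \circ \tr^{q^n/q}$ as the distinguished non-trivial character of $\F_{q^n}^{+}$ (where $\tr^{q^n/q}$ denotes the field trace); this choice makes the isomorphism $\CA^{\ast}(q^n) \cong \AC(q^n)$, $f \mapsto \tau_n \circ f$, compatible with the transition map $\widehat{\tr}_{n,1}$, in the sense that $\vsig \circ e_i^{\ast} \circ \tr_{n,1} = \tau_n \circ e_i^{\ast}$. Writing $\tet = \tau_n \circ f$ and $f = \sum_{i} \bet_i e_i^{\ast}$ with $\bet_i \in \F_{q^n}$, a direct computation using that $F$ fixes each $e_j$ and acts as Frobenius on $\F_{q^n}$-coordinates gives $\tr_{n,1}(\bet_i a) = \sum_j \tr^{q^n/q}(\bet_i a_j) e_j$ for $a = \sum_j a_j e_j \in \CA(q^n)$, so
$$\bigl(\bet_i(\vsig \circ e_i^{\ast} \circ \tr_{n,1})\bigr)(a) = \vsig\bigl(\tr^{q^n/q}(\bet_i a_i)\bigr).$$
Multiplying these values over $i$ (which realises the sum in $\AC(q^n)$) and using additivity of $\vsig \circ \tr^{q^n/q}$ on $\F_{q^n}^{+}$ yields $\vsig(\tr^{q^n/q}(f(a))) = \tet(a)$, proving $[\tet] = \sum_i \bet_i [\vsig \circ e_i^{\ast}]$. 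For linear independence, suppose $\sum_i \alp_i [\vsig \circ e_i^{\ast}] = 0$ with all $\alp_i \in \F_{q^n}$; injectivity of the maps $\widehat{\tr}_{n',n}$ forces $\sum_i \alp_i(\vsig \circ e_i^{\ast} \circ \tr_{n,1}) \in \AC(q^n)$ to be the trivial character, and the same computation gives $\vsig(\tr^{q^n/q}(\sum_i \alp_i c_i)) = 1$ for every $(c_i) \in \F_{q^n}^{r}$; non-degeneracy of $\vsig \circ \tr^{q^n/q}$ on $\F_{q^n}^{+}$ then forces each $\alp_i = 0$.

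The main obstacle is the bookkeeping in parts (a) and (b): two independent levels of field extension, the $m$ arising from the scalar and the $n$ arising from the representative, interact, and one must check that pushing along trace maps of the form $\tr_{mn',mn}$ does not disturb the scalar action. The unifying fact is that every $\tr_{n',n}$, being a sum of Frobenius powers $F^{kn}$ that fix $\F_{q^n}$ pointwise, is $\F_{q^n}$-linear; this forces scalar multiplication to commute with the push-up maps and reduces every identity to a computation inside a single sufficiently large $\AC(q^N)$.
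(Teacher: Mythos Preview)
Your proof is correct and follows essentially the same approach as the paper's. The paper dispatches well-definedness and the axioms with a single sentence (``straightforward calculations'' using $\tr_{m,m'}\circ\tr_{n,m}=\tr_{n,m'}$), whereas you spell out the two-level bookkeeping explicitly; for the basis assertion both arguments hinge on the same identity $\vsig\circ e_i^{\ast}\circ\tr_{n,1}=(\vsig\circ\operatorname{tr}_{n,1})\circ f_i^{\ast}$ (you compute it in coordinates, the paper states it directly), and both conclude linear independence from the fact that $\vsig\circ\operatorname{tr}_{n,1}$ is a non-trivial character of $\F_{q^n}^{+}$.
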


\begin{proof}
Observing that $\tr_{m,m'} \circ \tr_{n,m} = \tr_{n,m'}$ for all $n,m,m' \in \N$ with $m' \mid m \mid n$, it is a matter of straightforward calculations to prove that the scalar multiplication is well-defined.

On the other hand, let $\seq{\alp}{r} \in \k$ be such that $$\alp_{1} [\vsig \circ e_{1}^{\ast}] + \cdots + \alp_{r} [\vsig \circ e_{r}^{\ast}] = \mathbf{0}$$ where $\mathbf{0} \in \AC$ denotes the equivalence class which contains the trivial character $0_{n} = 1_{\CA(q^{n})}$ of $\CA(q^{n})$ for some (hence, for all) $n \in \N$. Let $n \in \N$ be such that $\seq{\alp}{r} \in \F_{q^{n}}$. Then $$\alp_{1} (\vsig \circ e_{1}^{\ast} \circ \tr_{n,1}) + \cdots + \alp_{r} (\vsig \circ e_{r}^{\ast} \circ \tr_{n,1}) = 0_{n},$$ and so $$\alp_{1} (\vsig \circ \operatorname{tr}_{n,1} \circ f_{1}^{\ast}) + \cdots + \alp_{r} (\vsig \circ \operatorname{tr}_{n,1} \circ f_{r}^{\ast}) = 0_{n}$$ where $\{\seq{f^{\ast}}{r}\}$ is the $\F_{q^{n}}$-basis of $\CA^{\ast}(q^{n})$ dual to the $\F_{q^{n}}$-basis $\{\seq{e}{r}\}$ of $\CA(q^{n})$, and where $\map{\operatorname{tr}_{n,1}}{\F_{q^{n}}}{\fq}$ is the usual trace map defined by $$\operatorname{tr}_{n,1}(\alp) = \alp + \alp^{q} + \cdots + \alp^{q^{n-1}}$$ for all $\alp \in \F_{q^{n}}$. Since $\map{\operatorname{tr}_{n,1}}{\CA(q^{n})}{\CA(q)}$ is surjective, $\vsig' = \vsig \circ \operatorname{tr}_{n,1}$ is a non-trivial character of $\F_{q^{n}}^{+}$, and thus $\{\seq{\vsig' \circ f}{r}\}$ is an $\F_{q^{n}}$-basis of $\AC(q^{n})$. This implies that $\alp_{1} = \ldots = \alp_{r} = 0$, and so $\{[\vsig \circ e_{1}^{\ast}], \ldots, [\vsig \circ e_{r}^{\ast}]\}$ is a linearly independent subset of $\AC$.

Finally, let $n \in \N$, and let $\tet \in \AC(q^{n})$. Let $\vsig' = \vsig \circ \operatorname{tr}_{n,1}$ and $\{\seq{f^{\ast}}{r}\}$ be as above. Then there are $\seq{\alp}{r} \in \F_{q^{n}}$ such that $$\tet = \alp_{1} (\vsig' \circ f_{1}^{\ast}) + \cdots + \alp_{r} (\vsig' \circ f_{r}^{\ast}).$$ Since $\operatorname{tr}_{n,1} \circ f_{i}^{\ast} = e_{i}^{\ast} \circ \tr_{n,1})$ for all $1 \leq i \leq r$, we conclude that $$\tet = \alp_{1} (\vsig \circ e_{1}^{\ast} \circ \tr_{n,1}) + \cdots + \alp_{r} (\vsig \circ e_{r}^{\ast} \circ \tr_{n,1}),$$ and this clearly implies that $$[\tet] = \alp_{1} [\vsig \circ e_{1}^{\ast}] + \cdots + \alp_{r} [\vsig \circ e_{r}^{\ast}].$$ It follows that $\{[\vsig \circ e_{1}^{\ast}], \ldots, [\vsig \circ e_{r}^{\ast}]\}$ is a spanning set of $\AC$, and this completes the proof.
\end{proof}

Since there is no danger of ambiguity, we may assume that $\seq{e}{r} \in \CA$, so that $\{\seq{e}{r}\}$ is a $\k$-basis of $\CA$. The following result is an obvious consequence of the previous result.

\begin{lemma} \label{isoserredual1}
The mapping $e_{i} \mapsto [\vsig \circ e_{i}^{\ast}]$ extends linearly to a $\k$-linear isomorphism $\map{\VPsi}{\CA}{\AC}$.
\end{lemma}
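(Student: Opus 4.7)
The plan is to derive the lemma almost immediately from \refp{vectorspace}. The setup in the paper fixes $\{e_1,\ldots,e_r\}$ as an $\fq$-basis of $\CA(q)$, and since $\CA = \CA(q)\ox_{\fq}\k$, the same elements $\{e_1,\ldots,e_r\}$ (identified with $e_i\ox 1$) form a $\k$-basis of $\CA$. So the domain side of the map is spanned by a known basis, and the job reduces to matching it with a known basis on the codomain.

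First I would recall from \refp{vectorspace} that $\AC$ carries the structure of a $\k$-vector space, and that the set $\{[\vsig\circ e_1^{\ast}],\ldots,[\vsig\circ e_r^{\ast}]\}$ is in fact a $\k$-basis of $\AC$. Granting this, I would then observe the elementary universal fact that, given two vector spaces over the same field together with (ordered) bases of the same cardinality, any bijection between the chosen bases extends uniquely to a linear isomorphism. Applying this to $\CA$ (with basis $\{e_1,\ldots,e_r\}$) and $\AC$ (with basis $\{[\vsig\circ e_i^{\ast}]\}_{i=1}^{r}$), the assignment $e_i\mapsto [\vsig\circ e_i^{\ast}]$ extends uniquely to a $\k$-linear map $\map{\VPsi}{\CA}{\AC}$, and by construction it sends a $\k$-basis to a $\k$-basis, so it is a $\k$-linear isomorphism.

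There is no real obstacle here: the content of the statement has already been absorbed into \refp{vectorspace}. The only thing worth double-checking is the harmless compatibility that $\{e_1,\ldots,e_r\}$ remains $\k$-linearly independent and spanning after extension of scalars from $\fq$ to $\k$, which is immediate from $\CA=\CA(q)\ox_{\fq}\k$. The proof itself will therefore amount to little more than one or two sentences citing the previous proposition.
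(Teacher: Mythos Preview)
Your proposal is correct and matches the paper's approach exactly: the paper states this lemma as ``an obvious consequence of the previous result'' (\refp{vectorspace}) without giving a separate proof, and your argument spells out precisely that observation. There is nothing to add.
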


Henceforth, we fix a unitary character $\map{\widetilde{\vsig}}{\k}{\cx}$ of the additive group $\k^{+}$ which extends the (non-trivial) character $\map{\vsig}{\fq}{\cx}$ of $\fq^{+}$. Such an extension always exists; see for example \cite[Corollary~4.41]{Folland1995a}.

\begin{lemma}
Let $\CA^{\circ}$ denote the Pontryagin dual of the additive group $\CA^{+}$. For every $\alp \in \k$ and every $\tet \in \CA^{\circ}$, we define the scalar product $\alp\tet \in \CA^{\circ}$ by $(\alp\tet)(\bet) = \tet(\alp\bet)$ for all $\bet \in \k$. Then $\CA^{\circ}$ becomes a vector space over $\k$ with respect to the scalar multiplication defined by the mapping $(\alp,\tet) \mapsto \alp\tet$. Furthermore, if $\{\seq{\widetilde{e}^{\,\ast}}{r}\}$ is the $\k$-basis of $\CA^{\ast}$ dual to the $\k$-basis $\{\seq{e}{r}\}$ of $\CA$, then the subset $\{\seq{\widetilde{\vsig} \circ \widetilde{e}^{\,\ast}}{r}\}$ of $\CA^{\circ}$ is linearly independent over $\k$.
\end{lemma}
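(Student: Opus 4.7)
The plan is to split the argument into two parts: verifying the vector space axioms, and then deducing linear independence from the fact that $\widetilde{\vsig}$ is nontrivial on $\k$.

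For the first part, I would begin by noting that $\CA$ inherits a $\k$-vector space structure from $\CA = \CA(q) \otimes_{\fq} \k$, and that multiplication by any fixed $\alpha \in \k$ is a continuous $\k$-linear endomorphism of $\CA$ (continuity follows from the chosen topology on $\k$ and $\CA$). Hence $\alpha\tet = \tet \circ (\alpha \cdot)$ remains a continuous homomorphism $\CA^{+} \to \cx$, i.e., an element of $\CA^{\circ}$. The axioms are then routine: writing the group operation on $\CA^{\circ}$ additively (so that $(\tet_{1}+\tet_{2})(a) = \tet_{1}(a)\tet_{2}(a)$), one computes for $\alpha,\bet \in \k$ and $a \in \CA$
\begin{align*}
((\alpha+\bet)\tet)(a) &= \tet((\alpha+\bet)a) = \tet(\alpha a)\tet(\bet a) = (\alpha\tet + \bet\tet)(a),\\
((\alpha\bet)\tet)(a) &= \tet(\alpha\bet a) = (\bet\tet)(\alpha a) = (\alpha(\bet\tet))(a),
\end{align*}
and $(1\cdot\tet)(a) = \tet(a)$, with distributivity over $\tet_{1}+\tet_{2}$ being immediate from the definition. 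No subtle point arises here.

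For the second part, suppose $\alpha_{1}, \ldots, \alpha_{r} \in \k$ satisfy
\[
\alpha_{1}(\widetilde{\vsig} \circ \widetilde{e}^{\,\ast}_{1}) + \cdots + \alpha_{r}(\widetilde{\vsig} \circ \widetilde{e}^{\,\ast}_{r}) = \mathbf{1}
\]
in $\CA^{\circ}$, where $\mathbf{1}$ is the trivial character. Evaluating at an arbitrary $a = \bet_{1} e_{1} + \cdots + \bet_{r} e_{r} \in \CA$ with $\bet_{j} \in \k$, using $\widetilde{e}^{\,\ast}_{i}(\alpha_{i} a) = \alpha_{i}\bet_{i}$, the relation becomes
\[
\widetilde{\vsig}(\alpha_{1}\bet_{1}) \widetilde{\vsig}(\alpha_{2}\bet_{2}) \cdots \widetilde{\vsig}(\alpha_{r}\bet_{r}) = 1
\quad\text{for all } \bet_{1}, \ldots, \bet_{r} \in \k.
\]
Specialising to $\bet_{j} = 0$ for $j \neq i$ and letting $\bet_{i}$ range over $\k$ gives $\widetilde{\vsig}(\alpha_{i}\bet) = 1$ for all $\bet \in \k$. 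If $\alpha_{i}$ were nonzero, then $\bet \mapsto \alpha_{i}\bet$ would be a bijection of $\k$, forcing $\widetilde{\vsig}$ to be trivial on $\k$, contradicting the hypothesis that $\widetilde{\vsig}$ extends the nontrivial character $\vsig$ of $\fq^{+}$. Therefore $\alpha_{i} = 0$ for each $i$, and the family is linearly independent.

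The main (mild) obstacle is conceptual rather than computational: keeping track of the dual roles of $\k$ (as the ground field for $\CA$ and as the source of scalars acting on characters), and recognising that it suffices to separate the coordinates $\bet_{i}$ to reduce linear independence to the nontriviality of $\widetilde{\vsig}$ on $\k$. Everything else reduces to direct computation using the $\k$-bilinear pairing between $\CA$ and $\CA^{\ast}$ extended from the $\fq$-bilinear one.
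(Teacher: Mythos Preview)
Your proof is correct and essentially the same as the paper's, only more explicit: the paper dismisses the first assertion as ``straightforward'' and handles the second by observing that $f \mapsto \widetilde{\vsig} \circ f$ is an injective $\k$-linear map $\CA^{\ast} \to \CA^{\circ}$, which sends the basis $\{\widetilde{e}^{\,\ast}_{1}, \ldots, \widetilde{e}^{\,\ast}_{r}\}$ to a linearly independent set. Your coordinate computation is exactly what underlies the paper's ``clearly injective''---a nonzero $f \in \CA^{\ast}$ surjects onto $\k$, so $\widetilde{\vsig} \circ f$ trivial would force $\widetilde{\vsig}$ trivial---so the arguments coincide.
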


\begin{proof}
The proof of the first assertion is straightforward. On the other hand, the mapping $f \mapsto \widetilde{\vsig} \circ f$ clearly defines an injective $\k$-linear map $\map{\widetilde{\vsig}^{\,\ast}}{\CA^{\ast}}{\CA^{\circ}}$. Since $\{\seq{\widetilde{e}^{\,\ast}}{r}\}$ is the $\k$-basis of $\CA^{\ast}$, it follows that $\{\widetilde{\vsig}^{\,\ast}(\widetilde{e}^{\,\ast}_{1}), \ldots, \widetilde{\vsig}^{\,\ast}(\widetilde{e}^{\,\ast}_{r})\}$ is a linearly independent subset of $\CA^{\circ}$, as required.
\end{proof}

\begin{proposition} \label{isoserredual2}
The mapping $e_{i} \mapsto \widetilde{\vsig} \circ \widetilde{e}^{\,\ast}_{i}$ extends linearly to an injective $\k$-linear map $\map{\VPhi}{\CA}{\CA^{\circ}}$ whose image $\VPhi(\CA)$ is dense with respect to the topology of pointwise convergence in $\CA^{\circ}$. Furthermore, the composition $\Lam = \VPhi \circ \VPsi\inv$ is an injective $\k$-linear map $\map{\Lam}{\AC}{\CA^{\circ}}$ whose image $\Lam(\AC)$ is dense in $\CA^{\circ}$.
\end{proposition}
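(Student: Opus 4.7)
Plan for the proof.

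The well-definedness, $\k$-linearity and injectivity of $\VPhi$ will be an essentially formal consequence of the previous lemma. Since $\{\seq{e}{r}\}$ is a $\k$-basis of $\CA$ and the previous lemma endows $\CA^{\circ}$ with a $\k$-vector space structure, I will extend the assignment $e_{i} \mapsto \widetilde{\vsig} \circ \widetilde{e}^{\,\ast}_{i}$ uniquely by the universal property of a basis to a $\k$-linear map $\map{\VPhi}{\CA}{\CA^{\circ}}$; injectivity will then be immediate from the linear independence of $\{\seq{\widetilde{\vsig} \circ \widetilde{e}^{\,\ast}}{r}\}$ proved there.

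For the density statement---the real content of the proposition---I would first make $\VPhi$ explicit. Unwinding the definition of the $\k$-action on $\CA^{\circ}$ together with the fact that the group operation on $\CA^{\circ}$ is pointwise multiplication of characters yields the pairing formula
\begin{equation*}
\VPhi(f)(a) \;=\; \widetilde{\vsig}\bigl(B(f,a)\bigr), \qquad B(f,a) \;=\; \sum_{i=1}^{r} \alp_{i}\bet_{i},
\end{equation*}
valid for $f = \sum_{i} \alp_{i} e_{i}$ and $a = \sum_{j} \bet_{j} e_{j}$; here $B$ is the nondegenerate symmetric $\k$-bilinear form on $\CA$ determined by the fixed basis. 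Density of the subgroup $\VPhi(\CA) \sset \CA^{\circ}$ for the topology of pointwise convergence then reduces, by the standard Pontryagin-type principle, to showing that its annihilator $\set{a \in \CA}{\tet(a) = 1 \all \tet \in \VPhi(\CA)}$ is trivial. This is the main step: if $a$ lies in this annihilator, then replacing $f$ by $\lambda f$ for arbitrary $\lambda \in \k$ gives $\widetilde{\vsig}(\lambda B(f,a)) = 1$ for all $\lambda \in \k$ and all $f \in \CA$. Hence $\widetilde{\vsig}$ vanishes on the additive subgroup $B(f,a) \cdot \k$ of $\k$; if $B(f,a) \neq 0$ for some $f$, this subgroup equals $\k$ (since $\k$ is a field), forcing $\widetilde{\vsig} \equiv 1$ and contradicting $\widetilde{\vsig}|_{\fq} = \vsig$. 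Thus $B(f,a) = 0$ for all $f$, and nondegeneracy of $B$ forces $a = 0$.

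The assertion about $\Lam$ will require no further work: $\Lam = \VPhi \circ \VPsi\inv$ is $\k$-linear and injective as a composition of the $\k$-linear isomorphism of \refl{isoserredual1} with the injective $\k$-linear map $\VPhi$ just constructed, and $\Lam(\AC) = \VPhi(\VPsi\inv(\AC)) = \VPhi(\CA)$ is dense in $\CA^{\circ}$ by the preceding paragraph. The only delicate point of the whole argument is the density step, whose crux is the $\k$-scaling trick that promotes the nontriviality of $\widetilde{\vsig}$ on $\fq$ to nontriviality on all of $\k$, combined with the standard fact that a subgroup of the Pontryagin dual with trivial annihilator is dense for pointwise convergence.
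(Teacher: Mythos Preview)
Your argument is correct, and for the density step it takes a genuinely different route from the paper's. The paper argues constructively: given $\tet \in \CA^{\circ}$, it restricts $\tet$ to each $\CA(q^{n})$, uses that $\widetilde{\vsig}|_{\F_{q^{n}}}$ is nontrivial to write the restriction as $\widetilde{\vsig} \circ f_{n}$ for some $f_{n} \in \CA^{\ast}(q^{n})$, and then lifts $f_{n}$ to an element $\widetilde{f}_{n} \in \CA^{\ast}$ with the same coordinates in the dual basis, so that $\widetilde{\vsig} \circ \widetilde{f}_{n} \in \VPhi(\CA)$ agrees with $\tet$ on $\CA(q^{n})$; since every $a \in \CA$ eventually lies in some $\CA(q^{n})$, this exhibits an explicit sequence in $\VPhi(\CA)$ converging pointwise to $\tet$. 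Your approach instead computes the annihilator of $\VPhi(\CA)$ in $\CA$ and shows it is trivial via the $\k$-scaling trick, then invokes the Pontryagin principle that a subgroup of the dual with trivial annihilator is dense. The paper's argument is entirely elementary and stays within the pointwise topology throughout, at the cost of being more hands-on; yours is shorter and more conceptual, but leans on the locally compact abelian hypothesis on $\CA$ (which the paper does impose) and on the standard duality machinery. One small point worth making explicit in your write-up: the annihilator/closure correspondence is usually stated for the compact-open topology on $\CA^{\circ}$, so you should note that density there implies density for the coarser topology of pointwise convergence. The treatment of injectivity and of $\Lam$ is the same in both.
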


\begin{proof}
By the previous lemma, it is clear that the linear map $\map{\VPhi}{\CA}{\CA^{\circ}}$ is injective. It is also clear that $\VPhi(\CA) = \set{\widetilde{\vsig} \circ f}{f \in \CA^{\ast}}$. On the other hand, let $\tet \in \CA^{\circ}$ be arbitrary, and for every $n \in \N$, let $\tet_{n} \in \AC(q^{n})$ denote the restriction $\tet|_{\AC(q^{n})}$ of $\tet$ to $\CA(q^{n})$. Then $\tet(a) = \lim_{n\to\infty} \tet_{n}(a)$ for all $a \in \CA$. Let $n \in \N$ be arbitrary, and observe that, since $\widetilde{\vsig}|_{\CA(q)} = \vsig$ is non-trivial, the restriction $\widetilde{\vsig}|_{\CA(q^{n})}$ is also non-trivial. Thus, every character of $\CA(q^{n})$ is of the form $\widetilde{\vsig}|_{\CA(q^{n})} \circ f = \widetilde{\vsig} \circ f$ for some $f \in \CA^{\ast}(q^{n})$. In particular, there exists $f_{n} \in \CA^{\ast}(q^{n})$ such that $\tet_{n} = \widetilde{\vsig} \circ f_{n}$. Now, if $\{e_{1,n}^{\ast}, \ldots, e_{r,n}^{\ast}\}$ is the $\F_{q^{n}}$-basis of $\CA^{\ast}(q^{n})$ dual to the $\F_{q^{n}}$-basis $\{\seq{e}{r}\}$ of $\CA(q^{n})$, then $f_{n} = \alp_{1} e_{1,n}^{\ast} + \cdots + \alp_{r} e_{r,n}^{\ast}$ for some $\seq{\alp}{r} \in \F_{q^{n}}$. Therefore, if we define $\widetilde{f}_{n} = \alp_{1} \widetilde{e}^{\,\ast}_{1} + \cdots + \alp_{r} \widetilde{e}^{\,\ast}_{r} \in \CA^{\ast}$, then $\widetilde{\vsig} \circ \widetilde{f}_{n} \in \VPhi(\CA)$ and $\tet_{n} = (\widetilde{\vsig} \circ \widetilde{f}_{n})|_{\CA(q^{n})}$. It follows that $$\tet(a) = \lim_{n\to\infty} \tet_{n}(a) = \lim_{n\to\infty} (\widetilde{\vsig} \circ \widetilde{f}_{n})(a)$$ for all $a \in \CA$, and this concludes the proof. (For the last assertion, it is enough to recall that $\map{\VPsi}{\CA}{\AC}$ is a $\k$-linear isomorphism.)
\end{proof}

We next consider the natural (continuous) action of $G$ on the left of $\CA^{\circ}$ given by $(g\tet)(a) = \tet(g\inv a)$ for all $g \in G$, all $\tet \in \CA^{\circ}$ and all $a\in \CA$. Using the injective $\k$-linear map $\map{\Lam}{\AC}{\CA^{\circ}}$, we may define an action of $G$ on the left of $\AC$ by the rule $g\btet = g\Lam(\btet)$ for all $g \in G$ and all $\btet \in \AC$. It is straightforward to check that $$g[\tet] = [g(\tet \circ \tr_{mn,n})]$$ whenever $g \in G(q^{m})$ for some $m \in \N$, and $\tet \in \AC(q^{n})$ for some $n \in \N$. We denote by $\bOme$ the set consisting of all $G$-orbits $G\btet$ for $\btet \in \AC$.

On the other hand, 
let $\CC^{0}(G)$ denote the complex vector space consisting of all continuous functions $G \to \C$, and consider the (continuous) action of $G$ on the left of $\CC^{0}(G)$ given by $(g\phi)(h) = \phi(g\inv h)$ for all $g,h \in G$ and all $\phi \in \CC^{0}(G)$. Hence, $\CC^{0}(G)$ becomes a left $\C[G]$-module which is unitary with respect to the usual Hermitian inner product. Similarly, let $\CC^{0}(\CA)$ denote the complex vector space of all continuous functions $\CA \to \C$, and observe that there is an obvious $\C$-linear isomorphism $\map{\eps}{\CC^{0}(\CA)}{\CC^{0}(G)}$ where, for every $\phi \in \CC^{0}(G)$, the function $\eps(\phi) \in \CC^{0}(\CA)$ is defined by $\eps(\phi)(g) = \phi(g-1)$ for all $g \in G$. In particular, since the Pontryagin dual $\CA^{\circ}$ is a linearly independent subset of $\CC^{0}(\CA)$, we see that $\eps(\CA^{\circ})$ is a linearly independent subset of $\CC^{0}(G)$, and this implies that $\eps(\Lam(\AC))$ is also a linearly independent subset of $\CC^{0}(G)$. For simplicity of writing, since there is no danger of confusion, we will henceforth identify $\AC$ with $\Lam(\AC)$, so that we realise $\AC$ as a (dense discrete) subset of $\CA^{\circ}$.

Now, let $\CV$ denote the vector subspace of $\CC^{0}(G)$ spanned by $\eps(\AC)$. It is not hard to check that $$g\eps(\tet) = \tet(g\inv-1)\eps(g\tet)$$ for all $g \in G$ and all $\tet \in \CA^{\circ}$, and thus $\eps(\CA^{\circ})$ is a $G$-invariant subset of $\CC^{0}(G)$. In particular, we see that the subset $\eps(\AC)$ of $\CC^{0}(G)$ is also $G$-invariant, and so $\CV$ is a $G$-invariant vector subspace (hence, a $\C[G]$-submodule) of $\CC^{0}(G)$. Furthermore, since $\eps(\AC)$ decomposes as the disjoint union of the distinct $G$-orbits $G\eps(\btet) = \eps(G\btet)$ for $\btet \in \AC$, the vector subspace $\CV$ decomposes as a direct sum $$\CV = \bigoplus_{\bome \in \bOme} \CV_{\bome}$$ where, for every $\bome \in \bOme$,
\begin{equation}\label{super}
\CV_{\bome} = \sum_{\btet \in \bome} \C\eps(\btet)
\end{equation}
is  the vector subspace of $\CC^{0}(G)$ linearly spanned by the $G$-orbit $\eps(\bome) \sset \CC^{0}(G)$. It is obvious that, for every $\bome \in \bOme$, the subspace $\CV_{\bome}$ is $G$-invariant, and hence $\CV_{\bome}$ is a $\C[G]$-submodule of $\CV$ which we call the \textit{super $\C[G]$-module} of $G$ associated with $\bome$.

Let $\btet \in \AC$ (viewed as an element of $\CA^{\circ}$), and let $L_{G}(\btet) = \set{g \in G}{g\btet = \btet}$ be the centraliser of $\btet$ in $G$. Then it is easy to prove that $L_{G}(\btet)$ is an algebra subgroup of $G$, and that the function $\eps(\btet) \in \CC^{0}(\CA)$ defines by restriction a linear (unitary) character $\map{\nu_{\btet}}{L_{G}(\btet)}{\cx}$ of $L_{G}(\btet)$. Let $\C_{\btet}$ denote a one-dimensional $\C[L_{G}(\btet)]$-module which affords $\nu_{\btet}$, and observe that $L_{G}(\btet)$ is the stabiliser of $\C_{\btet}$. On the other hand, let $\bome = G\btet$ be the $G$-orbit of $\btet$, and consider the $\C[G]$-module $\CV_{\bome}$. Then $G$ acts transitively on the basis $\eps(\bome)$ of $\CV_{\bome}$, and so there exists an isomorphism of $\C[G]$-modules $$\CV_{\bome} \cong \C[G] \ox_{\C[L_{G}(\btet)]} \C_{\btet} = \ind^{G}_{L_{G}(\btet)}(\C_{\btet}).$$

Now, suppose that $\btet = [\tet]$ for some $\tet \in \AC(q^{n})$ and some $n \in \N$. It is not hard to check that $\tet$ equals the restriction $\btet|_{\CA(q^{n})}$ of $\btet \in \CA^{\ast}$ to $\CA(q^{n})$. Furthermore, an argument similar to the one used in the proof of \refp{invariant} shows that the $G(q^{n})$-orbit $G(q^{n})\tet$ is precisely the intersection $G\btet \cap \AC(q^{n})$ which we will denote by $\bome(q^{n})$. As above, we may associate with this $G(q^{n})$-orbit a $\C[G(q^{n})]$-module $\CV_{\bome(q^{n})}$, which turns out to be isomorphic to the induced module $\ind^{G(q^{n})}_{L_{G(q^{n})}(\tet)}(\C_{\tet})$ where $\C_{\tet}$ denotes a one-dimensional $\C[L_{G(q^{n})}(\tet)]$-module which affords the linear character $\map{\nu_{\tet}}{L_{G(q^{n})}(\tet)}{\cx}$ of $L_{G(q^{n})}(\tet)$. In particular, it follows that $\CV_{\bome(q^{n})}$ affords the supercharacter $$\xi_{\tet} = \ind^{G(q^{n})}_{L_{G(q^{n})}(\tet)}(\nu_{\tet})$$ of $G(q^{n})$ (recall \refeq{supch} for the definition of $\xi_{\tet}$).

Therefore, we conclude that, for all $n \in \N$ such that $\btet = [\tet]$ for some $\tet \in \AC(q^{n})$, the $\C[G]$-module $\CV_{\bome}$ contains in its restriction to $G(q^{n})$ an isomorphic copy of the $\C[G(q^{n})]$-module $\CV_{\bome(q^{n})}$ which affords the supercharacter $\xi_{\tet}$ of $G(q^{n})$. In fact, we have the following result (where we identify $\AC$ with the subset $\Lam(\AC)$ of $\CA^{\circ}$).

\begin{theorem} \label{thm:super}
Let $\bxi \in \SCh(G)$ be arbitrary, and let $n \in \N$ be such that $\bxi = [\xi]$ for some supercharacter $\xi \in \SCh(G(q^{n}))$. Let $\btet = [\tet] \in \AC$ where $\tet \in \AC(q^{n})$ is such that $\xi = \xi_{\tet}$, and define $\CV(\bxi)$ to be the isomorphism class of the super $\C[G]$-module $\CV_{\bome}$ associated with the $G$-orbit $\bome = G\btet \sset \AC$. Then the mapping $\bxi \mapsto \CV(\bxi)$ induces a well-defined bijection $\map{\varPi}{\SCh(G)}{\SMod(G)}$ where we denote by $\SMod(G)$ the set consisting of all isomorphism classes of super $\C[G]$-modules.
\end{theorem}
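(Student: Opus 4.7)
The plan is to verify three claims: (i) the rule $\bxi \mapsto \CV(\bxi)$ is well-defined, (ii) the resulting map $\varPi$ is surjective, and (iii) it is injective. The first two are essentially bookkeeping after the preceding material; injectivity is the substantive step.

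For (i), the ambiguity has two sources. If $\tet, \tet' \in \AC(q^{n})$ satisfy $\xi_{\tet} = \xi_{\tet'}$, then $\tet$ and $\tet'$ lie in a common two-sided $G(q^{n})$-orbit, and a direct coordinate check in the embedding $\Lam \colon \AC \hookrightarrow \CA^{\circ}$ (using that the $\Gam$-actions are compatible with $\Lam$) shows the corresponding $\btet, \btet' \in \CA^{\circ}$ lie in a common two-sided $G$-orbit. Since $\CV_{\bome} \cong \ind_{L_{G}(\btet)}^{G}(\C_{\btet})$ by the discussion preceding the theorem, its isomorphism class depends only on the $G$-conjugacy class of the pair $(L_{G}(\btet), \nu_{\btet})$; one verifies directly that this pair is $G$-conjugate under the left action and that the right action preserves both the stabiliser and the linear character, so the isomorphism class $\CV_{\bome}$ is unchanged. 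Second, if $\bxi = [\xi_{\tau}] = [\xi_{\tau'}]$ with $\tau \in \AC(q^{m})$, $\tau' \in \AC(q^{n})$, $m \mid n$, the equivalence relation on $\SCh(G)$ gives $\xi_{\tau'} = \xi_{\tau} \circ \nm_{n,m}$ and \reft{main} then identifies $\xi_{\tau'}$ with $\xi_{\tau \circ \tr_{n,m}}$; since the defining equivalence on $\AC$ identifies $\tau \circ \tr_{n,m}$ with $\tau$, the class $\btet$ and therefore $\bome$ and $\CV_{\bome}$ are the same.

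For (ii), any super $\C[G]$-module is $\CV_{\bome}$ for some $G$-orbit $\bome = G\btet$ in $\AC$. Since $\AC = \varinjlim_{n} \AC(q^{n})$, we can write $\btet = [\tet]$ for some $\tet \in \AC(q^{n})$, and then $[\xi_{\tet}] \in \SCh(G)$ maps to the isomorphism class of $\CV_{\bome}$ by construction.

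For (iii), the main obstacle, suppose $\CV_{\bome_{1}} \cong \CV_{\bome_{2}}$ as $\C[G]$-modules, and choose $n$ large enough that both classes are represented at level $n$, say $\bxi_{i} = [\xi_{\tet_{i}}]$ with $\tet_{i} \in \AC(q^{n})$ and $\btet_{i} = [\tet_{i}]$. Via $\CV_{\bome_{i}} \cong \ind_{L_{G}(\btet_{i})}^{G}(\C_{\btet_{i}})$ and Frobenius reciprocity, a nonzero $G$-homomorphism $\CV_{\bome_{1}} \to \CV_{\bome_{2}}$ yields some $\btet' \in \bome_{2}$ whose left stabiliser contains $L_{G}(\btet_{1})$ and on which $L_{G}(\btet_{1})$ acts through $\nu_{\btet_{1}}$; exploiting the algebra-subgroup (hence connected) nature of these stabilisers, together with the defining formula $\nu_{\btet}(g) = \btet(g-1)$, one deduces that $\btet_{1}$ and $\btet'$ (hence $\btet_{2}$) lie in a common two-sided $G$-orbit in $\CA^{\circ}$. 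A Lang-type argument as in the proof of \refp{invariant}, applied to this common $\Gam$-orbit, then forces $\tet_{1}$ and $\tet_{2}$ into a single two-sided $G(q^{n})$-orbit in $\AC(q^{n})$, so $\xi_{\tet_{1}} = \xi_{\tet_{2}}$ and $\bxi_{1} = \bxi_{2}$. The delicate point is the passage from the abstract module isomorphism to the two-sided $G$-orbit in $\CA^{\circ}$, which I expect to require a careful Mackey-style analysis that leverages the algebra-subgroup structure of $L_{G}(\btet_{1})$ and the explicit action of $G$ on the distinguished basis $\eps(\bome_{2})$ of $\CV_{\bome_{2}}$.
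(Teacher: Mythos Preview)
Your treatment of well-definedness and surjectivity is essentially the paper's, just organized slightly differently (you split the ambiguity in (i) into two cases where the paper handles both at once), and your final descent step in (iii) --- using the Lang-type argument of \refp{invariant} to pass from a common $\Gam$-orbit to a common $\Gam(q^{n})$-orbit --- is a legitimate alternative to the paper's device of simply enlarging $n$ to some $n''$ containing the witnesses $g,h$.

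The substantive divergence is in how you extract the two-sided $G$-orbit conclusion from the module isomorphism. The paper does \emph{not} use Frobenius reciprocity or a Mackey analysis. Instead it exploits the fact that $\CV \subset \CC^{0}(G)$ is a $(G,G)$-bimodule: the right regular action satisfies $\eps(\btet)h \in \C\,\eps(\btet h)$, so the span $\CW_{\bome}$ of $\eps(\btet G)$ is exactly the space of left $\C[G]$-module maps $\CV_{\bome} \to \CV$ (each such map is right multiplication by some element of $\C[G]$ on the cyclic generator $\eps(\btet)$). Hence $\Hom_{\C[G]}(\CV_{\bome},\CV_{\bome'}) \cong \CW_{\bome} \cap \CV_{\bome'}$, which is spanned by $\eps(\btet G \cap G\btet')$, and nonvanishing immediately gives $\btet' \in G\btet G$.

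Your proposed route has a genuine gap at exactly the point you flag as delicate. Frobenius reciprocity gives you an $L_{G}(\btet_{1})$-eigenvector $v \in \CV_{\bome_{2}}$, but its support in the basis $\eps(\bome_{2})$ is only an $L_{G}(\btet_{1})$-\emph{invariant subset}, not a single fixed point; the claim that some $\btet' \in \bome_{2}$ has $L_{G}(\btet_{1}) \subseteq L_{G}(\btet')$ does not follow. Even granting that (say via a dimension/cardinality comparison using that the modules are isomorphic and not merely linked by a nonzero map), the further step from ``$L_{G}(\btet_{1}) = L_{G}(\btet')$ and $\nu_{\btet_{1}} = \nu_{\btet'}$ on it'' to ``$\btet_{1}$ and $\btet'$ lie in the same two-sided $G$-orbit'' is precisely the disjointness of supercharacters, which over the infinite group $G$ you cannot invoke by character orthogonality. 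The bimodule argument bypasses all of this in one line.
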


\begin{proof}
On one hand, let $n' \in \N$ and let $\tet' \in \AC(q^{n'})$ be such that $\bxi = [\xi']$ where $\xi' = \xi_{\tet'} \in \SCh(G(q^{n'}))$. Without loss of generality, we may assume that $n \mid n'$. By \reft{main}, we see that $\xi_{\tet} = \xi_{\tet' \circ \tr_{n',n}}$, and so $\tet'\circ\tr_{n',n} = g\tet h$ for some $g,h \in G(q^{n})$. It follows that $\btet' = [\tet'] = [g\tet h] = g\btet h$ where we define the $(G\x G)$-action on $\AC$ is defined in a way completely analogous to the $G$-action. It follows that $\btet' \in G\btet h = \bome h$, and thus the $\C[G]$-module $\CV_{\bome'} = \CV_{\bome h}$ is isomorphic to $\CV_{\bome}$. Indeed, it is straightforward to check that $\eps(g\tet h) = \ovl{(g\tet)(h\inv-1)} (\eps(g\tet) h)$ for all $g \in G$. Therefore, $\CV(\bxi)$ does not depend on the choice of the representative of $\bxi$, and so the map $\map{\varPi}{\SCh(G)}{\SMod(G)}$ is well-defined. It is clearly surjective, and thus it remains to prove that it is injective.

Let $\bome, \bome' \in \bOme$ be such that the super $\C[G]$-modules $\CV_{\bome}$ and $\CV_{\bome'}$ are isomorphic, and let $\btet, \btet' \in \AC$ be such that $\bome = G\btet$ and $\bome' = G\btet'$. Moreover, let $n,n' \in \N$ be such that $\btet = [\tet]$ and $\btet' = [\tet']$ for some $\tet \in \AC(q^{n})$ and some $\tet' \in \AC(q^{n'})$. Then we must prove that the supercharacters $\xi_{\tet} \in \SCh(G(q^{n}))$ and $\xi_{\tet'} \in \SCh(G(q^{n'}))$ lie in the same equivalence class $\bxi \in \SCh(G)$.

We first prove that there exists a $\C$-linear isomorphism $$\Hom_{\C[G]}(\CV_{\bome},\CV_{\bome'}) \cong \CW_{\bome} \cap \CV_{\bome'}$$ where $\CW_{\bome}$ denotes the vector subspace of $\CV$ linearly spanned by $\eps(\btet G)$. To see this, let $(\CV_{\bome})^{\perp} = \bigoplus_{\bome'' \in \bOme \setminus \{\bome\}} \CV_{\bome''}$. Then $(\CV_{\bome})^{\perp}$ is a $\C[G]$-submodule of $\CV$ satisfying $\CV = \CV_{\bome} \+ (\CV_{\bome})^{\perp}$, and every $\vphi \in \Hom_{\C[G]}(\CV_{\bome},\C[G])$ can be naturally extended to an endomorphism $\widetilde{\vphi} \in \End_{\C[G]}(\C[G])$ satisfying $(\CV_{\bome})^{\perp} \sset \ker(\widetilde{\vphi})$, and thus there exists a unique $z \in \C[G]$ such that $\widetilde{\vphi}(a) = az$ for all $a \in \C[G]$. In particular, we see that $\vphi(\eps(\btet)) = \eps(\btet)z \in \CW_{\bome}$, and so the mapping $\vphi \mapsto \vphi(\eps(\btet))$ defines a $\C$-linear isomorphism $\Hom_{\C[G]}(\CV_{\bome},\C[G]) \cong \CW_{\bome}$. Since $\vphi \in \Hom_{\C[G]}(\CV_{\bome},\CV_{\bome'})$ if and only if $\vphi(\eps(\btet)) \in \CW_{\bome} \cap \CV_{\bome'}$, we conclude that the mapping $\vphi \mapsto \vphi(\eps(\btet))$ restricts to a $\C$-linear isomorphism $\Hom_{\C[G]}(\CV_{\bome},\CV_{\bome'}) \cong \CW_{\bome} \cap \CV_{\bome'}$.

Now, since the super $\C[G]$-modules $\CV_{\bome}$ and $\CV_{\bome'}$ are isomorphic, $\CW_{\bome} \cap \CV_{\bome'} \neq \{0\}$. Since $\CW_{\bome} \cap \CV_{\bome'}$ is linearly spanned by the intersection $G\btet \cap \btet G$, we conclude that $\btet' = g\btet h$ for some $g,h \in G$. Finally, we may clearly choose $n'' \in \N$ such that $g,h \in G(q^{n''})$ and $\btet = [\tet'']$ for some $\tet'' \in \AC(q^{n''})$. Then $\btet' = g\btet h = [g\tet'' h]$, and so $\tet' \sim g\tet'' h$ (recall that $\btet' = [\tet']$). By \reft{main}, we deduce that $\xi_{\tet'} \sim \xi_{g\tet'' h} = \xi_{\tet''} \sim \xi_{\tet}$ (because $\btet = [\tet''] = [\tet]$), and thus $[\xi_{\tet'}] = [\xi_{\tet}]$, as required.
\end{proof}



\bibliographystyle{acm}

\end{document}